\newcommand{\de}{\partial}
\newcommand{\dbar}{\overline{\partial}}
\newcommand{\ddbar}{\sqrt{-1} \partial \overline{\partial}}
\newcommand{\ov}[1]{\overline{#1}}
\newcommand{\ti}[1]{\tilde{#1}}
\newcommand{\vp}{\varphi}
\newcommand{\vol}{\mathrm{Vol}}
\renewcommand{\leq}{\leqslant}
\renewcommand{\geq}{\geqslant}
\renewcommand{\le}{\leqslant}
\renewcommand{\ge}{\geqslant}
\newcommand{\be}{\begin{equation}}
\newcommand{\ee}{\end{equation}}
\newcommand{\ol}{\overline}
\begin{document}
\newtheorem{claim}{Claim}
\newtheorem{theorem}{Theorem}[section]
\newtheorem{lemma}[theorem]{Lemma}
\newtheorem{corollary}[theorem]{Corollary}
\newtheorem{proposition}[theorem]{Proposition}
\newtheorem{question}{question}[section]
\theoremstyle{definition}
\newtheorem{remark}[theorem]{Remark}
\newtheorem{conjecture}[theorem]{Conjecture}

\numberwithin{equation}{section}

\title{The Fu-Yau equation in higher dimensions}
\dedicatory{Dedicated to Professor Gang Tian on the occasion of his 60th birthday}
\author[J. Chu]{Jianchun Chu}
\address{Institute of Mathematics, Academy of Mathematics and Systems Science, Chinese Academy of Sciences, Beijing 100190, P. R. China}
\email{chujianchun@gmail.com}
\author[L. Huang]{Liding Huang}
\address{School of Mathematical Sciences, University of Science and Technology of China, Hefei 230026, P. R. China}
\email{huangld@mail.ustc.edu.cn}
\author[X. Zhu]{Xiaohua Zhu}
\address{School of Mathematical Sciences, Peking University, Yiheyuan Road 5, Beijing 100871, P. R. China }
\email{xhzhu@math.pku.edu.cn}

\subjclass[2010]{Primary:  58J05; Secondary: 53C55, 35J60}

\keywords{The Fu-Yau equation, Strominger system, $2$-nd Hessian equation.}

\begin{abstract}
In this paper, we prove the existence of solutions to the Fu-Yau equation on compact K\"{a}hler manifolds. As an application, we give a class of non-trivial solutions of the modified Strominger system.
\end{abstract}

\maketitle

\section{Introduction}
In 1985, Strominger proposed a new system of equations, now referred as the Strominger system, on $3$-dimensional complex manifolds \cite{Str86}. This system arises from the study on supergravity in theoretical physics.
Mathematically, the Strominger system can be regarded as a generalization of the Calabi equation for Ricci-flat K\"ahler metrics to
non-K\"{a}hler spaces \cite{YS05}. It is also related to Reid's fantasy on the moduli space of Calabi-Yau threefolds \cite{RM87}.

Let us first recall this system. Assume that $X$ is a $3$-dimensional Hermitian manifold which admits a nowhere vanishing holomorphic $(3,0)$-form $\Omega$. Let $E\rightarrow X$ be a holomorphic vector bundle with Hermitian metric $H$.  The Strominger system is given by
\begin{equation}\label{Hermitian-Einstein equation}
F_{H}\wedge\omega_{X}^{2} = 0, F_{H}^{2,0}  = F_{H}^{0,2} = 0;
\end{equation}
\begin{equation}\label{Dilation equation}
d^{*}\omega_{X} = \sqrt{-1}(\ov{\partial}-  \partial)\log\|\Omega\|_{\omega_{X}};
\end{equation}
\begin{equation}\label{Bianchi identity}
\ddbar\omega_{X}-\frac{\alpha}{4}(\textrm{tr}R\wedge  R  -\textrm{tr}F_{H}\wedge F_{H})=0,
\end{equation}
where $\omega_X$ is a Hermitian metric on $X$ with the Chern curvature $R$ and $F_{H}$ is the curvature of Hermitian metrics $(E,H)$. By $\textrm{tr}$, we denote
the trace of Endomorphism bundle of either $E$ or $TX$.

To achieve a supersymmetry theory, both $H$ and $\omega_{X}$ have to satisfy \eqref{Hermitian-Einstein equation} and \eqref{Dilation equation}.
\eqref{Dilation equation} is also called the dilation equation.
Li and Yau observed that it is equivalent to a conformally balanced condition \cite{LY05},
\begin{equation*}
d(\|\Omega\|_{\omega_{X}}\omega_{X}^{2}) \,=\, 0.
\end{equation*}
The lest understood equation of the system  is \eqref{Bianchi identity} known as the Bianchi identity, which is also related to index theorey for Dirac operators (\cite{FD86}, \cite{WE85}), the topological theory of string structures (\cite{BU11}, \cite{RC11}, \cite{SSS12}) and generalized geometry (\cite{BH15}, \cite{FM14}, \cite{FRT}).
It is an equation on $4$-forms and intertwines $\omega_{X}$ with the curvatures $R$ and $F_{H}$, which is very difficult to understand in view of analysis.

We know little about the Strominger in general except for a few special spaces on which one can make use of particular structures.
In \cite{LY05}, Li and Yau found the first irreducible smooth solution. They considered a stable holomorphic bundle $E$ of rank $r=4,5$ on a Calabi-Yau $3$-fold $X$
and constructed a solution of the Strominger system as a perturbation of a Calabi-Yau metric on $X$ and a Hermitian-Einstein metric $H$ on $E$.

In \cite{FuY08}, Fu and Yau constructed non-perturbative, non-K\"{a}hler solutions of the Strominger system on a toric fibration over a $K3$ surface constructed by Goldstein-Prokushki. Let us recall this construction. Let
$\omega_{1}$ and $\omega_{2}$ be two anti-self-dual $(1,1)$ forms on a $K3$ surface $(S,\omega_{S})$  $(\omega_{S}$ is a K\"{a}hler-Ricci flat metric on $S$)  with a nowhere vanishing holomorphic $(2,0)$-form $\Omega_{S}$  satisfying: $[\frac{\omega_{1}}{2\pi}],[\frac{\omega_{2}}{2\pi}]\in H^{1,1}(S,\mathbb{Z})$. In
\cite{GP04}, Goldstein-Prokushki constructed a toric fibration $\pi: X\rightarrow S$ which is determined by $\omega_{1}$, $\omega_{2}$ and a $(1,0)$ form $\theta$ on $X$ such that
\begin{equation*}
\Omega = \pi^{*}(\Omega_{S})\wedge\theta
\end{equation*}
defines a nowhere vanishing holomorphic $(3,0)$ form on $X$.  Then, for any $\vp\in C^{\infty}(S)$, $(X,\omega_{\vp})$ always satisfies (\ref{Dilation equation}), where
\begin{equation*}
\omega_{\vp} = \pi^{*}(e^{\vp}\omega_{S})+\sqrt{-1}\theta\wedge\ov{\theta}.
\end{equation*}
Thus if $E\rightarrow X$ is a degree zero stable holomorphic vector bundle with a Hermitian-Einstein metric $H$ on $E$,  $(\pi^{*}E,\pi^{*}H,X,\omega_{\vp})$ satisfies both (\ref{Hermitian-Einstein equation}) and (\ref{Dilation equation}). In \cite{FuY07},  Fu and Yau showed that (\ref{Bianchi identity}) for   $(\pi^{*}E,\pi^{*}H,X,\omega_{\vp})$  is equivalent to the following equation for $\varphi$, also called the Fu-Yau equation,
\begin{equation}\label{Fu-Yau equation-1}
\ddbar(e^{\vp}\omega_{S}-\alpha e^{-\vp}\rho)+2\alpha\ddbar\vp\wedge\ddbar\vp+\mu\frac{\omega_{S}^{2}}{2!}=0,
\end{equation}
where $\rho$ is a real-valued smooth $(1,1)$-form, $\mu$ is a smooth function and $\alpha\neq0$ is a constant called slope parameter.
They further proved the existence for (\ref{Fu-Yau equation-1}) in the case of $\alpha<0$ \cite{FuY08} and $\alpha>0$ \cite{FuY07} on K\"{a}hler surfaces, respectively.

In higher dimensions, Fu and Yau proposed a modified Strominger system for $(E, H, X, \omega_X )$,
\begin{equation*}
F_{H}\wedge\omega_{X}^{n} = 0,\quad F_{H}^{2,0} = F_{H}^{0,2} = 0;
\end{equation*}
\begin{equation*}\label{Modified dilatino equation}
d(\|\Omega\|_{\omega_{X}}^{\frac{2(n-1)}{n}}\omega_{X}^{n}) = 0;
\end{equation*}
\begin{equation*}\label{Modified Bianchi identity}
\left(\ddbar\omega_{X}-\frac{\alpha}{4}(\textrm{tr}R\wedge R-\textrm{tr}F_{H}\wedge F_{H})\right)\wedge\omega_{X}^{n-2} = 0.
\end{equation*}
Here $X$ is an $(n+1)$-dimensional Hermitian manifold, equipped with a nowhere vanishing holomorphic $(n+1,0)$ form $\Omega$.  Clearly, the modified Strominger system is the same as the original Strominger system when $n=2$. Given any Calabi-Yau manifold $M$
with a nowhere vanishing holomorphic $(n,0)$ form $\Omega_{M}$, Goldstein-Prokushki's construction gives rise to a toric fibration $\pi:X\mapsto M$ as in case of $K3$ surfaces. Fu and Yau showed that the modified Strominger system for $(\pi^{*}E,\pi^{*}H,X,\omega_{\vp})$ can be reduced to the Fu-Yau equation on $M$,
\begin{equation}\label{Fu-Yau equation}
\begin{split}
\ddbar(e^{\vp}\omega & -\alpha e^{-\vp}\rho) \wedge\omega^{n-2} \\
& +n\alpha\ddbar\vp\wedge\ddbar\vp\wedge\omega^{n-2}+\mu\frac{\omega^{n}}{n!}=0.
\end{split}
\end{equation}

More recently, Phong, Picard and Zhang proved the existence for (\ref{Fu-Yau equation}) in higher dimensions when $\alpha<0$ \cite{PPZ16b}. However,
the solvability of (\ref{Fu-Yau equation}) in higher dimensions is still open when $\alpha>0$.
The purpose of present paper is to give a complete solution in this case. Actually, we will give a unified way for (\ref{Fu-Yau equation}) in higher dimensions in
both cases $\alpha>0$ and $\alpha<0$, more precisely, we prove

\begin{theorem}\label{Existence Theorem}
Let $(M,\omega)$ be an $n$-dimensional compact K\"{a}hler manifold. There exists a small constant $A_0>0$ depending only on $\alpha$, $\rho$, $\mu$ and $(M,\omega)$ such that for any positive $A\leq A_0$, there exists a smooth solution $\vp$ of (\ref{Fu-Yau equation}) satisfying the elliptic condition
\begin{equation}\label{Elliptic condition}
\tilde{\omega} = e^{\vp}\omega+\alpha e^{-\vp}\rho+2n\alpha\ddbar\vp \in \Gamma_{2}(M),
\end{equation}
and the normalization condition
\begin{equation}\label{Normalization condition}
\|e^{-\vp}\|_{L^{1}} = A,
\end{equation}
where $\Gamma_{2}(M)$ is the space of $2$-th convex $(1,1)$-forms (cf. Section 3).
\end{theorem}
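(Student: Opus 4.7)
The plan is to solve (\ref{Fu-Yau equation}) by the method of continuity. I parameterize by $t \in [0,1]$ a family of Fu-Yau type equations whose nonlinear terms and source are scaled so that $t = 1$ recovers (\ref{Fu-Yau equation}) while $t = 0$ admits the constant solution $\vp_0 = \log(\vol(M)/A)$, which lies in $\Gamma_2(M)$ automatically. Throughout the deformation the normalization \eqref{Normalization condition} is imposed, and I let $T \subset [0,1]$ denote the set of parameters for which a smooth solution $\vp_t$ with $\tilde\omega_t \in \Gamma_2(M)$ exists. Then $0 \in T$ and Theorem \ref{Existence Theorem} reduces to showing that $T$ is open and closed.

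Openness at $t_0 \in T$ is obtained by the implicit function theorem in $C^{2,\alpha}(M)$. Linearizing at $\vp_{t_0}$, the condition $\tilde\omega_{t_0} \in \Gamma_2(M)$ makes the principal part a strictly elliptic second-order operator whose only obstruction to surjectivity is the kernel of constants; the constraint \eqref{Normalization condition} fixes this one-parameter family, and a standard Fredholm argument supplies a solution nearby.

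Closedness is the heart of the matter and amounts to uniform a priori $C^\infty$ estimates for $\vp_t$ depending only on $\alpha$, $\rho$, $\mu$, $(M,\omega)$ and $A_0$. I would proceed in the usual order: (i) a zeroth order estimate, where the normalization \eqref{Normalization condition} together with the smallness of $A$ forces $\vp_t$ to be large everywhere, so $e^{\vp_t}\omega$ dominates in $\tilde\omega_t$ (the upper bound coming from integration of the equation against $\omega^{n-1}$, the lower bound from a Moser iteration adapted to the $\Gamma_2$ cone); (ii) a gradient estimate of the form $|\nabla\vp_t|^2 \leq C e^{\vp_t}$, via the maximum principle on an auxiliary function such as $|\nabla\vp_t|^2 e^{-\vp_t} + \lambda e^{-\vp_t}$; (iii) a second order estimate $|\ddbar\vp_t|_\omega \leq C e^{\vp_t}$, the most delicate step, obtained from the maximum principle on a quantity of the form $\log \tr{\omega}{\tilde\omega_t} + \Phi(\vp_t) + B|\nabla\vp_t|^2 e^{-\vp_t}$ with $\Phi$ and $B$ tuned so that the dangerous terms arising from $\ddbar\vp\wedge\ddbar\vp$ and $e^{-\vp}\rho$ are absorbed; (iv) once (i)--(iii) give uniform ellipticity in $\Gamma_2$, Evans--Krylov estimates for concave $2$-Hessian type equations give $C^{2,\alpha}$ bounds, and Schauder theory bootstraps to all orders.

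The main obstacle I anticipate is step (iii), the $C^2$ estimate, together with preserving strict $\Gamma_2$ ellipticity along the continuity path. The quadratic Hessian term $n\alpha\,\ddbar\vp\wedge\ddbar\vp\wedge\omega^{n-2}$ distinguishes (\ref{Fu-Yau equation}) from a bare $\sigma_2$ equation and enters with opposite signs depending on the sign of $\alpha$, while the sign-indefinite contribution $\alpha e^{-\vp}\rho$ must be controlled by the exponential weight, which is small precisely because $A$ is small. Designing a single auxiliary function that closes the maximum principle estimate in a unified manner for both $\alpha > 0$ and $\alpha < 0$, with constants depending only on $A_0$, is where the technical weight of this unified approach will concentrate.
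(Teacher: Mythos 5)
Your overall framework --- continuity method, openness by the implicit function theorem with the kernel of constants handled through the normalization, closedness by a priori estimates, and $C^{2,\alpha}$ via Evans--Krylov type results once uniform ellipticity and non-degeneracy are in hand --- is exactly the paper's route, and your openness discussion is essentially correct (the paper's version is slightly more careful: the kernel of the adjoint consists of constants, so the kernel of the linearization is spanned by a positive function $u_0$, which the weighted normalization constraint $\int_M u\,e^{-\hat\vp}\omega^n=0$ excludes). The genuine gap is in closedness, and it sits precisely at the point you flag as ``the main obstacle'' without resolving it: the non-degeneracy estimate when $\alpha>0$. Writing the equation as $\sigma_2(\ti\omega)=\tfrac{n(n-1)}{2}\bigl(e^{2\vp}-4\alpha e^{\vp}|\de\vp|_g^2\bigr)+\cdots$, non-degeneracy requires $4\alpha e^{\vp}|\de\vp|_g^2$ to be strictly dominated by $e^{2\vp}$, i.e.\ $|\de\vp|_g^2\ll e^{\vp}$. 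Your proposed gradient estimate $|\nabla\vp|^2\leq Ce^{\vp}$ is too weak for this: with an unspecified constant $C$ it does not prevent the right-hand side from degenerating or becoming negative, and likewise your second-order bound $|\ddbar\vp|_\omega\leq Ce^{\vp}$ only makes $e^{-\vp}|\ddbar\vp|_\omega$ bounded rather than small, which is not enough to keep the linearized coefficients $F^{i\ov{i}}=e^{-\vp}\sum_{k\neq i}\ti g_{k\ov{k}}$ uniformly close to $(n-1)$.

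Moreover, the linear chain $C^0\to C^1\to C^2$ cannot be carried out in that order here, because the right-hand side depends on $\de\vp$ and differentiating the equation in the gradient estimate produces terms in $\de\dbar\vp$ that must already be controlled. The paper's key device --- which your proposal is missing --- is a coupled, self-improving scheme: \emph{assume} $|\de\dbar\vp|_g\leq D$ and impose $A\leq A_D=\tfrac{1}{C_0M_0D}$ so that $e^{-\vp}|\de\dbar\vp|_g$ is small; then prove a gradient bound $|\de\vp|_g^2\leq M_1$ with $M_1$ \emph{independent of} $D$ (Proposition \ref{Gradient estimate}); then feed this back into the second-order estimate to conclude $|\de\dbar\vp|_g\leq D/2$ (Proposition \ref{Second order estimate}). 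Along the continuity path this improvement shows the bound $D_0$ is never attained, which yields the uniform $C^2$ bound and, combined with $e^{-\vp}\leq M_0A$ small and $|\de\vp|_g^2\leq M_1$, the non-degeneracy $\sigma_2(\ti\omega)\sim\tfrac{n(n-1)}{2}e^{2\vp}$ in (\ref{non-degenrate}). Without this mechanism (or a substitute for it), your step (iii) does not close for $\alpha>0$ in dimensions $n>2$, which is exactly the previously open case the theorem addresses.
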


\begin{remark}
We point out that if $\alpha<0$ and $n=2$, our normalization condition (\ref{Normalization condition}) is the same as that in \cite{FuY07}. However, in the case that $\alpha>0$ and $n=2$, Fu and Yau \cite{FuY08} solved (\ref{Fu-Yau equation}) under the normalization condition $\|e^{-\vp}\|_{L^{4}}=A$, which is stronger than (\ref{Normalization condition}). When $\alpha<0$ and $n>2$, Phong, Picard and Zhang used a different normalization condition $\|e^{\vp}\|_{L^{1}}=\frac{1}{A}$. Hence, our result is also new compared to the results cited above.
\end{remark}

As a geometric application of Theorem \ref{Existence Theorem}, we prove

\begin{theorem}
For any $n\geq 2$, there exists a function $\vp\in C^{\infty}(M)$ such that the Fu-Yau's reduction $(\pi^{*}E,\pi^{*}H,X,\omega_{\vp})$ yields a smooth solution of the modified Strominger system.
\end{theorem}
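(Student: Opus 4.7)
The plan is to realize the modified Strominger system directly from Theorem~\ref{Existence Theorem} via the Fu-Yau reduction described in the introduction. Starting from the given Calabi-Yau $n$-fold $(M,\omega)$ with its nowhere vanishing holomorphic $(n,0)$-form $\Omega_M$, I would select two integral $(1,1)$-forms $\omega_{1},\omega_{2}$ on $M$ satisfying the Goldstein-Prokushki hypotheses. Their construction then yields a toric fibration $\pi\colon X\to M$ together with a $(1,0)$-form $\theta$ on $X$ such that
\[
\Omega := \pi^{*}(\Omega_M)\wedge\theta
\]
is a nowhere vanishing holomorphic $(n+1,0)$-form on the total space $X$.

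Next I would pick a degree-zero stable holomorphic vector bundle $E\to X$ and invoke the Donaldson-Uhlenbeck-Yau theorem to obtain a Hermitian-Einstein metric $H$ on $E$. For any candidate $\vp\in C^{\infty}(M)$ set
\[
\omega_{\vp} = \pi^{*}(e^{\vp}\omega) + \sqrt{-1}\,\theta\wedge\ov{\theta}.
\]
By the computations recalled in the introduction, the data $(\pi^{*}E,\pi^{*}H,X,\omega_{\vp})$ automatically satisfies the first equation of the modified Strominger system together with the modified dilatino equation $d(\|\Omega\|_{\omega_{\vp}}^{2(n-1)/n}\omega_{\vp}^{n}) = 0$, and this holds regardless of the particular $\vp$ we use.

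What remains is the modified Bianchi identity. Wedging with $\omega_{\vp}^{n-2}$ collapses this four-form equation to a scalar equation on the base $M$, and Fu-Yau's reduction shows that this scalar equation is precisely \eqref{Fu-Yau equation} for an explicit smooth real $(1,1)$-form $\rho$ and smooth function $\mu$ on $M$, built from $\omega,\omega_{1},\omega_{2},\Omega_M$, the curvature of $\pi^{*}H$, and the Chern curvature of a reference Hermitian metric on $TX$. The slope parameter $\alpha\neq 0$ is prescribed. Feeding these data into Theorem~\ref{Existence Theorem} produces, for every sufficiently small $A>0$, a smooth $\vp$ solving \eqref{Fu-Yau equation} together with \eqref{Elliptic condition} and \eqref{Normalization condition}; this $\vp$ then yields the desired solution.

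The only genuine obstruction is Fu-Yau's reduction step itself: verifying that wedging the modified Bianchi identity with $\omega_{\vp}^{n-2}$ is equivalent to (and not merely implied by) equation \eqref{Fu-Yau equation}, and pinning down the precise shape of $\rho$ and $\mu$ in higher dimensions. Since these algebraic manipulations are carried out in Fu and Yau's original papers, no additional analytic input beyond Theorem~\ref{Existence Theorem} is required.
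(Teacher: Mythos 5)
Your proposal is correct and matches the paper's (essentially unwritten) argument: the paper treats this theorem as an immediate corollary of Theorem \ref{Existence Theorem} via the Goldstein--Prokushkin fibration and Fu--Yau's reduction of the modified Bianchi identity to equation \eqref{Fu-Yau equation}, exactly as you describe. The only point worth making explicit is that $(E,\omega_{1},\omega_{2})$ must be chosen so that the resulting $\mu$ satisfies $\int_{M}\mu\,\omega^{n}=0$ (the anomaly-cancellation condition implicitly required by Theorem \ref{Existence Theorem} and used in Section 5), but this is part of the standard Fu--Yau setup and the paper does not address it either.
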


From the view point of PDE, (\ref{Fu-Yau equation}) can be written as a $2$-nd Hessian equation of the form
\begin{align}\label{fu-2}
\sigma_{2}(\ti{\omega})=F(z,\varphi,  \de\vp),
\end{align}
where
$$F(z,\varphi,  \de\vp)= \frac{n(n-1)}{2}\left(e^{2\vp}-4\alpha e^\varphi|\de\vp|_{g}^{2}\right)+\frac{n(n-1)}{2}f(z,\vp,\de\vp)$$
and $f(z,\vp,\de\vp)$  satisfies (cf. (\ref{Definition of f})),
$$|f(z,\vp,\de\vp)|\le C(e^{-2\varphi}+ e^{-\varphi}|\nabla\varphi|^2+1).$$

There are many interesting works for the $k$-th complex Hessian equation of the form:
\begin{align}\label{fu-2-2}
\sigma_{k}(\omega+\ddbar\vp)\,=\,F(z).
\end{align}
For examples, Hou, Ma and Wu proved the second order estimate for \eqref{fu-2-2} \cite{HMW10}; Combining Hou-Ma-Wu's estimate with a blow-up argument,
Dinew and Ko{\l}odziej solved \eqref{fu-2-2} \cite{DK12};  Sz\'ekelyhidi, and also Zhang, obtained analogous result in the Hermitian case \cite{Sze15}, \cite{Zha15}.

However, for the Fu-Yau equation (\ref{Fu-Yau equation}), new difficulties arise because the right hand side $F$ of (\ref{fu-2}) depends on $\de\vp$.  Moreover,
(\ref{Fu-Yau equation}) may become degenerate when $\alpha>0$. This makes a big difference between the case $\alpha>0$ and the case $\alpha <0$. When $\alpha<0$,   there is no issue on non-degeneracy. However,  when $\alpha >0$, one needs to establish a non-degeneracy estimate. In dimension $2$, Fu and Yau obtained such an estimate \cite{FuY08}. Unfortunately, their arguments do not work in higher dimensions. It has been a main obstacle to solving (\ref{Fu-Yau equation}) in higher dimensions when $\alpha >0$.

In this paper, we find a new method for establishing the non-degeneracy estimate. This estimate is different from either Fu-Yau's one in \cite{FuY07,FuY08} or   Phong-Picard-Zhang's one in \cite{PPZ15,PPZ16b}. We regard the first and second order estimates as a whole and derive the required non-degeneracy estimate.
To be more specific, assuming that
\begin{equation*}
|\de\dbar\vp|_{g} \leq D_{0},
\end{equation*}
where $D_{0}$ is a  constant (depending only on $n$, $\alpha$, $\rho$, $\mu$ and $(M,\omega)$) to be determined later, we derive a stronger gradient estimate ( independent of $D_0$)   by choosing a small number $A$ in (\ref{Normalization condition})  (cf. Proposition \ref{Gradient estimate}). Then using this stronger gradient estimate,   we obtain an improved estimate (cf. Proposition \ref{Second order estimate}),
\begin{equation*}
|\de\dbar\vp|_{g} \leq \frac{D_{0}}{2}.
\end{equation*}
This can be used to obtain  an a prior $C^2$-estimate and consequently  the  non-degeneracy estimate   via the continuity method (cf. (\ref{non-degenrate}) in Section 5).

From the proof of Theorem \ref{Existence Theorem},  we also prove the  following uniqueness  result  of  (\ref{Fu-Yau equation}).

\begin{theorem}\label{Uniqueness Theorem}The solution $\vp$ of (\ref{Fu-Yau equation}) is unique  if it
satisfies  (\ref{Elliptic condition}), (\ref{Normalization condition}) and
\begin{equation}\label{condition-uniqueness}
e^{-\vp} \leq \delta_{0},~~|\de\dbar\vp|_{g} \leq D,~~D_{0}\leq D \text{~and~} A \leq \frac{1}{C_{0}M_{0}D},
\end{equation}
where  $C_0$ is a uniform constant, and  $\delta_{0}, M_0$  and  $D_{0}$  are   constants determined    in Proposition \ref{Zero order estimate}, Proposition \ref{Second order estimate}, respectively.
\end{theorem}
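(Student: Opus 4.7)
Let $\vp_{1}, \vp_{2}$ be two solutions of (\ref{Fu-Yau equation}) satisfying the hypotheses of Theorem \ref{Uniqueness Theorem}, and set $\psi = \vp_{1} - \vp_{2}$. The goal is to show $\psi \equiv 0$. I would first recast the Fu-Yau equation in the Hessian form (\ref{fu-2}), so both $\vp_{i}$ solve $\sigma_{2}(\ti{\omega}_{i}) = F(z, \vp_{i}, \de\vp_{i})$. Because both $\ti{\omega}_{i} \in \Gamma_{2}(M)$ and $\Gamma_{2}$ is a convex cone, the mean-value form of Taylor's theorem produces a Hermitian tensor
\[
A^{i\bar{j}} = \int_{0}^{1} \sigma_{2}^{i\bar{j}}\bigl((1-t)\ti{\omega}_{2} + t\ti{\omega}_{1}\bigr)\, dt,
\]
which is positive-definite on $\Gamma_{2}$, and satisfies $\sigma_{2}(\ti{\omega}_{1}) - \sigma_{2}(\ti{\omega}_{2}) = A^{i\bar{j}}(\ti{\omega}_{1} - \ti{\omega}_{2})_{i\bar{j}}$. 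Substituting
\[
\ti{\omega}_{1} - \ti{\omega}_{2} = (e^{\vp_{1}} - e^{\vp_{2}})\omega + \alpha(e^{-\vp_{1}} - e^{-\vp_{2}})\rho + 2n\alpha\ddbar\psi
\]
and subtracting the two equations converts everything into a linear elliptic equation for $\psi$ whose coefficients are bounded in terms of $D$ through the a priori bound $|\de\dbar\vp_{i}|_{g} \leq D$ in (\ref{condition-uniqueness}).

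Next I evaluate this equation at a point $p$ where $\psi$ attains its maximum. There $\de\psi(p) = 0$, so $\de\vp_{1}(p) = \de\vp_{2}(p)$, and $\ddbar\psi(p) \leq 0$. The equality of gradients is decisive: the dangerous gradient-quadratic term $-4\alpha e^{\vp}|\de\vp|_{g}^{2}$ in $F$ contributes only $-4\alpha(e^{\vp_{1}} - e^{\vp_{2}})|\de\vp_{1}|_{g}^{2}$ to $F_{1} - F_{2}$ at $p$, and the a priori gradient estimate bounds this contribution by $K|e^{\vp_{1}(p)} - e^{\vp_{2}(p)}|$ with $K$ controlled by $M_{0}$. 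The explicit $\vp$-dependence of the lower-order term $f$ appearing in (\ref{fu-2}) is similarly controlled via $e^{-\vp_{i}} \leq \delta_{0}$. Together with $\ddbar\psi(p) \leq 0$, the evaluated equation collapses to
\[
\bigl(B(p) - K\bigr)\bigl(e^{\vp_{1}(p)} - e^{\vp_{2}(p)}\bigr) + C(p)\bigl(e^{-\vp_{1}(p)} - e^{-\vp_{2}(p)}\bigr) \geq 0,
\]
where $B(p), C(p) > 0$ are the zeroth-order coefficients produced by the linearization, with $B(p)$ growing like $e^{\vp_{i}}$ and hence forced to be large by the $L^{1}$-normalization combined with the small-$A$ mechanism.

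The quantitative hypothesis $A \leq \frac{1}{C_{0}M_{0}D}$ is exactly calibrated so that $B(p) > K$. Since $e^{\vp_{1}} - e^{\vp_{2}}$ and $-(e^{-\vp_{1}} - e^{-\vp_{2}})$ both share the sign of $\psi$, the displayed inequality then forces $\psi(p) \leq 0$, and hence $\psi \leq 0$ on $M$. The $L^{1}$-normalization (\ref{Normalization condition}) gives
\[
\int_{M} e^{-\vp_{2}}\bigl(e^{-\psi} - 1\bigr)\,\frac{\omega^{n}}{n!} = 0,
\]
and since $\psi \leq 0$ makes the integrand nonnegative it must vanish identically, yielding $\psi \equiv 0$.

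The main obstacle is the gradient-quadratic nonlinearity in $F$, which couples the two equations in a non-coercive way and is precisely what prevents a naive maximum principle from working without a smallness hypothesis. The cancellation $\de\vp_{1} = \de\vp_{2}$ at extremal points of $\psi$ isolates the residual coupling into a single term whose coefficient $K$ is controlled by $M_{0}$, but one still needs to dominate $K$ by the zeroth-order coefficient $B$ produced by the linearization. This is exactly what the threshold $A \leq 1/(C_{0}M_{0}D)$ in (\ref{condition-uniqueness}) provides, and it is the reason why uniqueness comes packaged with the same quantitative smallness as the existence argument.
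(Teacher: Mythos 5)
Your approach (a direct maximum principle applied to $\psi=\vp_1-\vp_2$ at its extremum) is genuinely different from the paper's, but it has a gap that I do not believe can be repaired as stated. The crux is your claim that the zeroth-order coefficient $B(p)$ coming from the linearization "grows like $e^{\vp}$" and therefore dominates the coupling constant $K$ once $A$ is small. It does not: writing $A^{i\ov{j}}=\int_0^1\sigma_2^{i\ov{j}}\bigl((1-t)\ti{\omega}_2+t\ti{\omega}_1\bigr)\,dt$, the term $(e^{\vp_1}-e^{\vp_2})A^{i\ov{j}}g_{i\ov{j}}=(e^{\vp_1}-e^{\vp_2})(n-1)\sigma_1\bigl(\tfrac{\ti{\omega}_1+\ti{\omega}_2}{2}\bigr)$ has leading part $\tfrac{n(n-1)}{2}(e^{\vp_1}+e^{\vp_2})(e^{\vp_1}-e^{\vp_2})$, which cancels \emph{exactly} against $\tfrac{n(n-1)}{2}(e^{2\vp_1}-e^{2\vp_2})$ coming from the leading term of $F$ in (\ref{fu-2}). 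What survives as the effective zeroth-order coefficient is of size $O(|\de\dbar\vp_i|_g)+O(|\de\vp|_g^2)+O(1)$ and contains $n(n-1)\alpha(\Delta\vp_1+\Delta\vp_2)$, which has no definite sign; no smallness of $A$ fixes this. This failure is not an accident: as the paper's openness argument shows, the linearized operator $(D_{\vp}\Phi)$ has a one-dimensional kernel spanned by a \emph{positive, generally non-constant} function $u_0$, and the existence of a positive kernel element forces the zeroth-order coefficient to change sign. So a pointwise maximum principle on $\psi$ alone cannot close; the normalization condition must enter before, not after, the sign conclusion. There is also a direction error in your final step: from $(B-K)(e^{\vp_1}-e^{\vp_2})+C(e^{-\vp_1}-e^{-\vp_2})\ge 0$ with $B-K>0$, $C>0$, a point with $\psi(p)>0$ makes the first term positive and the second small and negative, so the inequality is satisfied and yields no contradiction; it does not force $\psi(p)\le 0$. (Separately, the sign of $2n\alpha A^{i\ov{j}}\psi_{i\ov{j}}$ at a maximum of $\psi$ is favorable only for one sign of $\alpha$, which your argument does not address.)

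The paper avoids all of this by running the continuity path (\ref{Fu-Yau equation t}) \emph{backwards} from $t=1$ to $t=0$: the hypotheses (\ref{condition-uniqueness}) are exactly what is needed to start the a priori estimates of Propositions \ref{Zero order estimate}--\ref{Second order estimate} at $t=1$ and propagate them along the whole path, so each of the two given solutions lies on a solution branch defined on all of $[0,1]$. At $t=0$ the solution is explicitly unique ($\vp_0=-\ln A$, Lemma \ref{Uniqueness lemma 1}), and the local uniqueness furnished by the implicit function theorem then forces the two branches to coincide for every $t$. If you want to salvage a direct argument, you would have to incorporate the normalization constraint into the linear analysis (as the paper does via the transversality $\int_M u_0 e^{-\hat\vp}\omega^n>0$) rather than rely on a pointwise sign.
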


Since the normalization condition (\ref{Normalization condition}) is different from ones in the previous works as in \cite{FuY07,FuY08, PPZ15,PPZ16b}, etc., we shall also derive $C^0, C^1, C^2$-estimates for solutions of (\ref{fu-2}) step by step.

The  paper is organized  for each estimate in  one section.  Theorem  \ref{Existence Theorem} and Theorem  \ref{Uniqueness Theorem} are both proved  in last section, Section 5.

\noindent {\bf Acknowledgements.} On the occasion of his 60th birthday,  the authors  would like to thank Professor Gang Tian for his  guidance   and  encouragement   in mathematics.   His insight and teaching in mathematics give us a lot of benefits in  past years.    It is our pleasure to dedicate this paper to him.

\section{Zero order estimate}
In this section, we  use the iteration method to derive  the following  zero order estimate of $\vp$ to (\ref{Fu-Yau equation}).

\begin{proposition}\label{Zero order estimate}
Let $\vp$ be a smooth solution of (\ref{Fu-Yau equation}). There exist constants $A_{0}$ and $M_{0}$ depending only on $\alpha$, $\rho$, $\mu$ and $(M,\omega)$ such that if
\begin{equation*}
e^{-\vp} \leq \delta_{0} := \sqrt{\frac{1}{2|\alpha|\|\rho\|_{C^{0}}+1}} \text{~and~} \|e^{-\vp}\|_{L^{1}} = A \leq A_{0},
\end{equation*}
then
\begin{equation}\label{c0-estimate}
\frac{1}{M_{0}A}\leq e^{\inf_{M}\vp} \text{~and~} e^{\sup_{M}\vp}\leq \frac{M_{0}}{A}.
\end{equation}
\end{proposition}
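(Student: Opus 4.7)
The plan is to apply Moser iteration twice---once to obtain the upper bound $\sup_{M}e^{-\vp}\le M_{0}A$ (the first inequality of \eqref{c0-estimate}) and once to obtain the lower bound $\inf_{M}e^{-\vp}\ge A/M_{0}$ (the second inequality, equivalent to $\sup_{M}e^{\vp}\le M_{0}/A$). Since $\omega$ is K\"ahler, $\ddbar(e^{\vp}\omega)\wedge\omega^{n-2}=\ddbar(e^{\vp})\wedge\omega^{n-1}$, so \eqref{Fu-Yau equation} rewrites as
\begin{equation*}
\ddbar(e^{\vp})\wedge\omega^{n-1}-\alpha\ddbar(e^{-\vp}\rho)\wedge\omega^{n-2}+n\alpha(\ddbar\vp)^{2}\wedge\omega^{n-2}+\mu\frac{\omega^{n}}{n!}=0.
\end{equation*}

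For the upper bound on $e^{-\vp}$, I would multiply this identity by $e^{-(k+1)\vp}$ for $k\ge 0$ and integrate. Repeated integration by parts, using $d\omega=0$ and $\de(\ddbar\vp)=0$, converts the first term into a positive ``energy'' $\tfrac{k+1}{n}\int e^{-k\vp}|\de\vp|^{2}\omega^{n}$. The $\alpha\ddbar(e^{-\vp}\rho)$-term, after one IBP, yields contributions of the form $k\int e^{-(k+1)\vp}|\de\vp|^{2}\cdot(\text{bounded in }\rho)$, carrying an extra factor $e^{-\vp}\le\delta_{0}$. The $n\alpha(\ddbar\vp)^{2}$-term, after two IBPs, produces mixed expressions $k\int e^{-k\vp}\de\vp\wedge\dbar\vp\wedge\ddbar\vp\wedge\omega^{n-2}$ (and similar), which by Cauchy--Schwarz are again bounded by the energy multiplied by quantities of size $e^{-\vp}$. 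The precise value $\delta_{0}=(2|\alpha|\|\rho\|_{C^{0}}+1)^{-1/2}$ is chosen exactly so that, pointwise, these bad terms can be absorbed; the $\mu$-term is handled similarly. Setting $w:=e^{-k\vp/2}$ (equivalently $w=v^{k/2}$ with $v:=e^{-\vp}$), the resulting inequality takes the form
\begin{equation*}
\int_{M}|\nabla w|^{2}\,\omega^{n}\le Ck\int_{M}w^{2}\,\omega^{n}
\end{equation*}
with $C$ independent of $k$. Combined with the Sobolev embedding on $(M,\omega)$, the standard Moser iteration produces $\sup_{M}e^{-\vp}\le C_{0}\|e^{-\vp}\|_{L^{1}}=C_{0}A$.

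For the lower bound on $e^{-\vp}$, I would iterate in the opposite direction, testing the equation against $e^{(k-1)\vp}$ for $k\ge 1$ and working with $u:=e^{\vp}$. The same IBPs produce an analogous energy $\sim k\int u^{k-1}|\de u|^{2}\omega^{n}$, while the $\rho$-, $(\ddbar\vp)^{2}$- and $\mu$-terms are now controlled not by $\delta_{0}$ but by the upper bound $e^{-\vp}\le C_{0}A$ just established (hence small once $A\le A_{0}$). A base $L^{p}$ estimate on $u$ to initiate this second iteration is obtained directly from integrating the original equation together with $\|e^{-\vp}\|_{L^{1}}=A$ and H\"older's inequality. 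Moser iteration then yields $\sup_{M}e^{\vp}\le C_{1}/A$, and setting $M_{0}:=\max(C_{0},C_{1})$ concludes the proof.

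I expect the main technical obstacle to be the handling of the $n\alpha(\ddbar\vp)^{2}\wedge\omega^{n-2}$ term in both iterations: after two integrations by parts it produces gradient expressions such as $\de\vp\wedge\dbar\vp\wedge\ddbar\vp$ which are a priori of the same order as the positive energy, so their absorption is exactly what forces the numerical choice of $\delta_{0}$. A secondary delicacy is the reverse iteration, which lacks the natural positivity of the first iteration's energy term; here the $L^{\infty}$ bound on $e^{-\vp}$ from the first step, combined with the smallness of $A$, is essential in order to close the absorption.
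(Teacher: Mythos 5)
Your overall architecture (two Moser iterations, $\delta_{0}$ chosen to absorb the $\rho$-contribution, the first bound feeding into the second) matches the paper, but two essential mechanisms are missing, and in both places the step you propose would fail as written.

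First, the term $n\alpha\,\ddbar\vp\wedge\ddbar\vp\wedge\omega^{n-2}$. You claim that after integration by parts it produces expressions like $k\int e^{-k\vp}\sqrt{-1}\,\de\vp\wedge\dbar\vp\wedge\ddbar\vp\wedge\omega^{n-2}$ which Cauchy--Schwarz bounds by the energy times quantities of size $e^{-\vp}$. This cannot work: the factor $\ddbar\vp$ carries the full Hessian of $\vp$, which is completely uncontrolled at the zero-order stage (no second-order bound is assumed in the Proposition), and there is no natural factor of $e^{-\vp}$ attached to it. Further integration by parts only trades it for quartic gradient terms $k^{2}\int e^{-k\vp}|\de\vp|^{4}$, which also cannot be absorbed into the quadratic energy. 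The paper's proof closes this loop only by invoking the ellipticity hypothesis $\tilde{\omega}=e^{\vp}\omega+\alpha e^{-\vp}\rho+2n\alpha\ddbar\vp\in\Gamma_{2}(M)$: this gives the pointwise positivity $\sqrt{-1}\,\de\vp\wedge\dbar\vp\wedge\tilde{\omega}\wedge\omega^{n-2}\geq 0$, hence a one-sided bound on $2n\alpha k\int e^{-k\vp}\sqrt{-1}\,\de\vp\wedge\dbar\vp\wedge\ddbar\vp\wedge\omega^{n-2}$ from below by $-k\int e^{-k\vp}(e^{\vp}\omega+\alpha e^{-\vp}\rho)\wedge\sqrt{-1}\,\de\vp\wedge\dbar\vp\wedge\omega^{n-2}$; the same integral, evaluated instead via the equation and a second integration by parts, produces $-2k\int e^{-k\vp}(e^{\vp}\omega+\alpha e^{-\vp}\rho)\wedge\sqrt{-1}\,\de\vp\wedge\dbar\vp\wedge\omega^{n-2}$ plus harmless errors, and it is the resulting factor-of-two discrepancy that yields the energy inequality. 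Without using $\Gamma_{2}$ somewhere, the estimate is not closable, so you must state and use the elliptic condition explicitly.

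Second, the base estimate for the reverse iteration. You propose to get an initial $L^{p}$ bound on $e^{\vp}$ ``directly from integrating the original equation together with $\|e^{-\vp}\|_{L^{1}}=A$ and H\"older's inequality.'' H\"older goes the wrong way here: it gives $1\leq\|e^{\vp}\|_{L^{1}}^{1/2}\|e^{-\vp}\|_{L^{1}}^{1/2}$, i.e.\ a \emph{lower} bound $\|e^{\vp}\|_{L^{1}}\geq 1/A$, and integrating the equation over $M$ only yields $\int n\alpha\,\ddbar\vp\wedge\ddbar\vp\wedge\omega^{n-2}+\int\mu\frac{\omega^{n}}{n!}=0$, which contains no information on $\int e^{\vp}$. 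The paper instead proves $\|e^{\vp}\|_{L^{1}}\leq C/A$ by a genuine argument: the already-established bound $e^{-\vp}\leq M_{0}A$ forces the set $U=\{e^{-\vp}\geq A/2\}$ to have volume bounded below by $1/C_{0}$, and then the Poincar\'e inequality applied to $e^{\vp}$ (together with $\int e^{2\vp}|\de\vp|^{2}\leq C\int e^{\vp}$ from the $k=1$ energy estimate) lets one bound $\left(\int e^{\vp}\right)^{2}$ by splitting over $U$ and $M\setminus U$. You need some substitute for this step; as written your proposal has no valid starting point for the second Moser iteration.
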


\begin{proof} We first do  the infimum estimate. The supremum estimate depends on the established  infimum estimate.
By the choice  of $\delta_{0}$ and  the condition $e^{-\vp}\leq\delta_{0}$, it is clear that
\begin{equation}\label{Infimum estimate equation 4}
\omega+\alpha e^{-2\vp}\rho\geq\frac{1}{2}\omega.
\end{equation}
By the elliptic condition (\ref{Elliptic condition}), we have for $k\geq 2$,
\begin{equation*}
k\int_{M}e^{-k\vp}\sqrt{-1}\de\vp\wedge\dbar\vp\wedge\ti{\omega}\wedge\omega^{n-2} \geq 0.
\end{equation*}
By    the Stokes' formula,   it follows  that
\begin{equation}\label{Infimum estimate equation 2}
\begin{split}
& -k\int_{M}e^{-k\vp}(e^{\vp}\omega+\alpha e^{-\vp}\rho)\wedge\sqrt{-1}\de\vp\wedge\dbar\vp\wedge\omega^{n-2} \\
\leq {} & 2n\alpha k\int_{M}e^{-k\vp}\sqrt{-1}\de\vp\wedge\dbar\vp\wedge\ddbar\vp\wedge\omega^{n-2} \\
   = {} & -2n\alpha \int_{M}\sqrt{-1}\de e^{-k\vp}\wedge\dbar\vp\wedge\ddbar\vp\wedge\omega^{n-2} \\
   = {} & 2n\alpha \int_{M}e^{-k\vp}\ddbar\vp\wedge\ddbar\vp\wedge\omega^{n-2} \\
   = {} & -2\int_{M}e^{-k\vp}\ddbar(e^{\vp}\omega-\alpha e^{-\vp}\rho)\wedge\omega^{n-2}-2\int_{M}e^{-k\vp}\mu\frac{\omega^{n}}{n!}.
\end{split}
\end{equation}
In the last equality, we used the equation (\ref{Fu-Yau equation}).

For the first term of  right hand side  in (\ref{Infimum estimate equation 2}),   we compute
\begin{equation}\label{Infimum estimate equation 5}
\begin{split}
& -2\int_{M}e^{-k\vp}\ddbar(e^{\vp}\omega-\alpha e^{-\vp}\rho)\wedge\omega^{n-2} \\
= {} & -2k\int_{M}e^{-k\vp}\sqrt{-1}\de\vp\wedge\dbar(e^{\vp}\omega-\alpha e^{-\vp}\rho)\wedge\omega^{n-2} \\
= {} & -2k\int_{M}e^{-k\vp}(e^{\vp}\omega+\alpha e^{-\vp}\rho)\wedge\sqrt{-1}\de\vp\wedge\dbar\vp\wedge\omega^{n-2} \\
     & +2\alpha k\int_{M}e^{-(k+1)\vp}\sqrt{-1}\de\vp \wedge\dbar\rho\wedge\omega^{n-2}.
\end{split}
\end{equation}
Substituting (\ref{Infimum estimate equation 5}) into (\ref{Infimum estimate equation 2}),  we see that
\begin{equation}\label{Infimum estimate equation 3}
\begin{split}
& k\int_{M}e^{-k\vp}(e^{\vp}\omega+\alpha e^{-\vp}\rho)\wedge\sqrt{-1}\de\vp\wedge\dbar\vp\wedge\omega^{n-2} \\
\leq {} & 2\alpha k\int_{M}e^{-(k+1)\vp}\sqrt{-1}\de\vp \wedge\dbar\rho\wedge\omega^{n-2}-2\int_{M}e^{-k\vp}\mu\frac{\omega^{n}}{n!}.
\end{split}
\end{equation}
Combining (\ref{Infimum estimate equation 3}) with (\ref{Infimum estimate equation 4}) and the Cauchy-Schwarz inequality, it follows that
\begin{equation*}
\begin{split}
k\int_{M}e^{-(k-1)\vp}|\de\vp|_{g}^{2}\omega^{n}
\leq {} & Ck\int_{M}\left(e^{-(k+1)\vp}|\de\vp|_{g}+e^{-k\vp}\right)\omega^{n} \\
\leq {} & \frac{k}{2}\int_{M}e^{-(k-1)\vp}|\de\vp|_{g}^{2}\omega^{n}+Ck\int_{M}\left(e^{-(k+3)\vp}+e^{-k\vp}\right)\omega^{n}.
\end{split}
\end{equation*}
Recalling $e^{-\vp}\leq\delta_{0}$, we get
\begin{equation*}
\frac{k}{2}\int_{M}e^{-(k-1)\vp}|\de\vp|_{g}^{2}\omega^{n}
\leq Ck(\delta_{0}^{4}+\delta_{0})\int_{M}e^{-(k-1)\vp}\omega^{n},
\end{equation*}
which implies
\begin{equation*}
\int_{M}|\de e^{-\frac{(k-1)\vp}{2}}|_{g}^{2}\omega^{n} \leq Ck^{2}\int_{M}e^{-(k-1)\vp}\omega^{n}.
\end{equation*}
Replacing $k-1$ by $k$, for $k\geq 1$, we deduce
\begin{equation*}
\int_{M}|\de e^{-\frac{k\vp}{2}}|_{g}^{2}\omega^{n} \leq Ck^{2}\int_{M}e^{-k\vp}\omega^{n}.
\end{equation*}
Hence, by  the Moser iteration together with (\ref{Normalization condition}),  we obtain
$$\|e^{-\vp}\|_{L^{\infty}}\leq C\|e^{-\vp}\|_{L^{1}} = CA.$$
As a consequence, we prove
\begin{equation}\label{infimum estimate}\frac{1}{M_{0}A}\leq e^{\inf_{M}\vp}.
\end{equation}

Next we do  the supremum estimate.
By the similar calculation of (\ref{Infimum estimate equation 2})-(\ref{Infimum estimate equation 3}), for $k\geq1$, we have
\begin{equation*}
\begin{split}
        & k\int_{M}e^{k\vp}(e^{\vp}\omega+\alpha e^{-\vp}\rho)\wedge\sqrt{-1}\de\vp\wedge\dbar\vp\wedge\omega^{n-2} \\
\leq {} & 2\alpha k\int_{M}e^{(k-1)\vp}\sqrt{-1}\de\vp \wedge\dbar\rho\wedge\omega^{n-2}+2\int_{M}e^{k\vp}\mu\frac{\omega^{n}}{n!}.
\end{split}
\end{equation*}
Combining this with (\ref{Infimum estimate equation 4}), we have
\begin{equation*}
\int_{M}e^{(k+1)\vp}|\de\vp|_{g}^{2}\omega^{n} \leq C\int_{M}\left(e^{(k-1)\vp}|\de\vp|_{g}+e^{k\vp}\right)\omega^{n}.
\end{equation*}
Using $e^{-\vp}\leq\delta_{0}$ and the Cauchy-Schwarz inequality, it then follows that
\begin{equation}\label{Supremum estimate equation 2}
\int_{M}e^{(k+1)\vp}|\de\vp|_{g}^{2}\omega^{n} \leq C\int_{M}e^{k\vp}\omega^{n}.
\end{equation}
Moreover,  by \eqref{infimum estimate}, we get
\begin{equation}\label{Supremum estimate equation 1}
\int_{M}e^{k\vp}|\de\vp|_{g}^{2}\omega^{n} \leq C\int_{M}e^{k\vp}\omega^{n}.
\end{equation}
We will use (\ref{Supremum estimate equation 1}) to do the iteration.  We need

  \begin{claim}\label{l1-estimate}
   \begin{align}\label{claim} \|e^{\vp}\|_{L^{1}}\leq\frac{C}{A}.
   \end{align}
   \end{claim}

  Without loss of generality, we assume that $\vol(M,\omega)=1$.
We define a  set by
\begin{equation*}
U = \{x\in M~|~e^{-\vp(x)}\geq\frac{A}{2}\}.
\end{equation*}
Then by (\ref{infimum estimate}) and  (\ref{Normalization condition}),   we have
\begin{equation*}
\begin{split}
A =    {} & \int_{M}e^{-\vp}\omega^{n} \\
  =    {} & \int_{U}e^{-\vp}\omega^{n}+\int_{M\setminus U}e^{-\vp}\omega^{n} \\
  \leq {} & e^{-\inf_{M}\vp}\vol(U)+\frac{A}{2}(1-\vol(U)) \\
  \leq {} & \left(M_{0}-\frac{1}{2}\right)A\vol(U)+\frac{A}{2}.
\end{split}
\end{equation*}
It implies
\begin{equation}\label{Supremum estimate equation 3}
\vol(U) \geq \frac{1}{C_{0}}.
\end{equation}

On the other hand,  by  the Poincar\'{e} inequality and (\ref{Supremum estimate equation 2}) (taking $k=1$), we have
\begin{equation}
\int_{M}e^{2\vp}\omega^{n}-\left(\int_{M}e^{\vp}\omega^{n}\right)^{2}
\leq C\int_{M}|\de e^{\vp}|_{g}^{2}\omega^{n}
\leq C\int_{M}e^{\vp}\omega^{n}.\notag
\end{equation}
By (\ref{Supremum estimate equation 3}) and the Cauchy-Schwarz inequality, we obtain
\begin{equation*}
\begin{split}
\left(\int_{M}e^{\vp}\omega^{n}\right)^{2}
& \leq (1+C_{0})\left(\int_{U}e^{\vp}\omega^{n}\right)^{2}
       +\left(1+\frac{1}{C_{0}}\right)\left(\int_{M\setminus U}e^{\vp}\omega^{n}\right)^{2}\\
& \leq \frac{4(1+C_{0})}{A^{2}}(\vol(U))^{2}+\left(1+\frac{1}{C_{0}}\right)(1-\vol(U))^{2}\int_{M}e^{2\vp}\omega^{n}\\
& \leq \frac{4(1+C_{0})}{A^{2}}+\left(1-\frac{1}{C_{0}^{2}}\right)\left(\left(\int_{M}e^{\vp}\omega^{n}\right)^{2}
       +C\int_{M}e^{\vp}\omega^{n}\right).
\end{split}
\end{equation*}
Clearly,  the above  implies  (\ref{claim}).

By Claim \ref{l1-estimate}, (\ref{Supremum estimate equation 1}) and the Moser iteration,  we see that
\begin{equation*}
\|e^{\vp}\|_{L^{\infty}}\leq C\|e^{\vp}\|_{L^{1}}\leq\frac{C}{A}.
\end{equation*}
Thus
$$\|e^{\vp}\|_{L^{\infty}}\leq\frac{C}{A}.$$

\end{proof}

\section{First order estimate}
In this section, we give  the first order estimate of $\varphi$. For convenience, in this and next section, we say a constant  is uniform if it depends only on $\alpha$, $\rho$, $\mu$ and $(M,\omega)$.

\begin{proposition}\label{Gradient estimate}
Let $\vp$ be a solution of (\ref{Fu-Yau equation}) satisfying (\ref{Elliptic condition}).
Assume that
\begin{equation*}
\frac{A}{M_0}\leq e^{-\vp}\leq M_0A  \text{~and~} |\de\dbar\vp|_{g}\leq D,
\end{equation*}
where $M_0$ is a uniform constant.  Then there exists a uniform constant $C_{0}$ such that if
\begin{equation*}
A \leq A_{D} := \frac{1}{C_{0}M_{0}D},
\end{equation*}
then
\begin{equation*}
|\de\vp|_{g}^{2} \leq M_{1},
\end{equation*}
where $M_{1}$ is a uniform constant.
\end{proposition}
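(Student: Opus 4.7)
The plan is to apply the maximum principle to a suitable test function built from $|\de\vp|_g^2$ and $\vp$, using the linearized operator $L = 2n\alpha\,\sigma_2^{i\bar j}(\ti\omega)\,\de_i\de_{\bar j}$ of the $2$-Hessian equation (\ref{fu-2}). Since $\ti\omega \in \Gamma_2(M)$, $L$ is elliptic and the standard properties of the matrix $\sigma_2^{i\bar j}$ are available. I take a test function of the form
\[ Q = \log(|\de\vp|_g^2 + 1) + \psi(\vp), \]
with a weight $\psi$ (a small constant multiple of $e^{-\vp}$ or of $\vp$) to be chosen, and suppose $Q$ attains its maximum at some $p_0 \in M$; at $p_0$ I fix unitary coordinates for $\omega$ diagonalizing $\ti\omega$.

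At $p_0$, the critical relations $\de_k Q(p_0) = 0$ give
\[ \de_k|\de\vp|_g^2 = -\psi'(\vp)\bigl(|\de\vp|_g^2+1\bigr)\vp_k, \]
and the second-order inequality $LQ(p_0) \leq 0$ furnishes the main maximum-principle relation. Expanding $L|\de\vp|_g^2$ yields a non-negative \emph{good} Hessian term $2n\alpha\sum_{i,k}\sigma_2^{i\bar i}(|\vp_{ki}|^2+|\vp_{k\bar i}|^2)$ together with a third-order piece $2n\alpha\,\sigma_2^{i\bar i}(\vp_k\vp_{\bar k i\bar i} + \bar\vp_k\vp_{ki\bar i})$. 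Differentiating (\ref{fu-2}) along $\de_k$ allows me to replace the combination $2n\alpha\,\sigma_2^{i\bar i}\vp_{ki\bar i}$ by $\de_k F$ modulo controlled zeroth and first-order contributions. The dominant term $n(n-1)e^{2\vp}\vp_k$ in $\de_k F$ then contributes $n(n-1)e^{2\vp}|\de\vp|_g^2$ to the good side after multiplying by $\bar\vp_k$ and summing; this is the positive quantity that drives the contradiction.

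The remaining \emph{bad} terms fall into two groups: (i) terms of the schematic form $e^{\vp}|\de\vp|_g^4$ coming from $-4\alpha\,\de_k(e^\vp|\de\vp|_g^2)$ and related pieces, which after applying the critical equation and the Cauchy-Schwarz inequality are absorbed by the dominant good term $e^{2\vp}|\de\vp|_g^2$ provided $|\de\vp|_g^2$ is not already bounded by a uniform constant; (ii) terms of schematic form $e^{-\vp}|\de\dbar\vp|_g\cdot|\de\vp|_g^\ell$ arising from the $\rho$-contribution to $\sigma_2^{i\bar i}\de_k\ti\omega_{i\bar i}$ and from $\de_k f$. For the latter, the hypothesis $|\de\dbar\vp|_g \leq D$ combined with $e^{-\vp} \leq M_0 A \leq 1/(C_0 D)$ yields $e^{-\vp}|\de\dbar\vp|_g \leq 1/C_0$, so that choosing $C_0$ sufficiently large renders these contributions absorbable into the good terms uniformly in $D$. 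Combining everything then forces $|\de\vp|_g^2(p_0) \leq M_1$, and hence the claimed uniform bound on all of $M$.

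The main obstacle will be the careful bookkeeping of all error terms after differentiating $F$ (which itself depends on $\de\vp$) and the precise choice of $\psi$ so that every subleading contribution is either non-positive or absorbed into the good Hessian or $e^{2\vp}|\de\vp|_g^2$ terms. Especially delicate is the handling of the third-order derivative $\sigma_2^{i\bar i}\vp_{ki\bar i}$, whose sign naturally matches that of $2n\alpha$ but whose control hinges on the substitution via the differentiated equation; after this substitution, the positivity of $e^{2\vp}|\de\vp|_g^2$ on the good side becomes the pivotal source of the estimate, while the smallness of $AD$ is what makes the argument uniform in $D$ and delivers $M_1$ independent of $D$.
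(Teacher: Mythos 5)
Your overall framework---the maximum principle applied to $\log|\de\vp|_g^2$ plus a weight in $\vp$, differentiating the equation to treat the third-order terms, and using $e^{-\vp}|\de\dbar\vp|_g \leq M_0 D A_D \leq 1/C_0$ to make the linearized operator uniformly close to $(n-1)\Delta$ and to absorb the $\rho$- and $f$-contributions---is exactly the paper's. But the term you designate as the engine of the estimate is not the right one, and this is a genuine gap. When you differentiate $\sigma_2(\ti{\omega})=F$ along $e_k$, the left-hand side contains $\sigma_2^{i\ov{j}}(\ti{\omega})\,e^{\vp}\vp_k g_{i\ov{j}} = (n-1)\sigma_1(\ti{\omega})\,e^{\vp}\vp_k = n(n-1)e^{2\vp}\vp_k + \cdots$, which cancels the term $n(n-1)e^{2\vp}\vp_k$ of $\de_k F$ identically; so the ``dominant good term'' $n(n-1)e^{2\vp}|\de\vp|_g^2$ never survives the substitution. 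Even if it did, your absorption in (i) goes the wrong way: the maximum principle yields (good) $\leq$ (bad), i.e.\ $e^{2\vp}|\de\vp|_g^2 \leq Ce^{\vp}|\de\vp|_g^4+\cdots$, which is a \emph{lower} bound $|\de\vp|_g^2\gtrsim e^{\vp}$ unless one already knows $|\de\vp|_g^2\lesssim e^{\vp}\sim 1/A$; in no case does it produce a bound $M_1$ independent of $A$.

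What actually closes the estimate is different. First, the quartic terms cancel to leading order: the piece $+2(n-1)|\de\vp|_g^4$ produced by the $-4\alpha\,\de_k(e^{\vp}|\de\vp|_g^2)$ part of $\de_kF$ is matched against $2|\de\vp|_g^2F^{i\ov{i}}\vp_{i\ov{i}}\approx 2(n-1)|\de\vp|_g^2\Delta\vp$, which is bounded below by $-2(n-1)|\de\vp|_g^4$ minus small errors by invoking the equation itself (its trace gives $\Delta\vp+|\de\vp|_g^2\geq -\,$small, the paper's (\ref{Gradient estimate equation 2})) --- a step absent from your outline. After this cancellation the surviving source of positivity is $-2(n-1)\mathrm{Re}\sum_k(|\de\vp|_g^2)_k\vp_{\ov{k}}$, which the critical-point relation $dQ(x_0)=0$ converts into $+\tfrac{2(n-1)}{B}|\de\vp|_g^4$; this beats the residual $\tfrac{1}{B}|\de\vp|_g^4$ and $Ce^{-\vp}|\de\vp|_g^4$ errors precisely because $2(n-1)>1$ and $e^{-\vp}\leq M_0A$ is small, giving $|\de\vp|_g^2(x_0)\leq 8C_0^2$. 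Without identifying this cancellation and this source of positivity, the bookkeeping you defer cannot deliver an $M_1$ independent of $A$ and $D$.
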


\begin{remark}
The key point in Proposition \ref{Gradient estimate} is that $M_{1}$ is independent of $D$.
The constant $D$ can be chosen arbitrary  large  and  the constant  $A_{D}$ depends on $D$.  This will play an important role in the second order estimate  next section.   In fact, we will  determine $D$ so that $A$ can be determined (cf.  Proposition  \ref{Second order estimate}).
\end{remark}

As usually,  for any $\eta=(\eta_{1},\eta_{2},\cdots,\eta_{n})\in\mathbb{R}^{n}$,  we define
\begin{equation*}
\begin{split}
& \sigma_{k}(\eta) =\sum_{1<i_{1}<\cdots<i_{k}<n}\eta_{i_{1}}\eta_{i_{2}}\cdots\eta_{i_k},\\
\Gamma_{2}  = & \{ \eta\in\mathbb{R}^{n}~|~ \text{$\sigma_{j}(\eta)>0$ for $j=1,2$} \}.
\end{split}
\end{equation*}
Clearly $\sigma_2$ is a $2$-multiple functional.  Then one can extend it to $A^{1,1}(M)$ by
\begin{equation*}
\sigma_{k}(\alpha)=\left(
\begin{matrix}
n\\
k
\end{matrix}
\right)
\frac{\alpha^{k}\wedge\omega^{n-k}}{\omega^{n}}, ~\forall ~\alpha\in A^{1,1}(M),
\end{equation*}
where $A^{1,1}(M)$ is the space of smooth real (1,1) forms on $(M,\omega)$.
Define  a  cone $\Gamma_{2}(M)$ on   $A^{1,1}(M)$ by
\begin{equation*}
\Gamma_{2}(M)=\{ \alpha\in A^{1,1}(M)~|~ \text{$\sigma_{j}(\alpha)>0$ for $j=1,2$} \}.
\end{equation*}
Then,  (\ref{Fu-Yau equation})  is equivalent to  (\ref{fu-2}) while
the function $f(z,\vp,\de\vp)$ satisfies
\begin{align}\label{Definition of f}
f\omega^{n} &=  2\alpha\rho\wedge\omega^{n-1}+\alpha^{2}e^{-2\vp}\rho^{2}\wedge\omega^{n-2}-4n\alpha\mu\frac{\omega^{n}}{n!}\notag \\
                 & +4n\alpha^{2}e^{-\vp}\sqrt{-1}\left(\de\vp\wedge\dbar\vp\wedge\rho-\de\vp\wedge\dbar\rho
                   -\de\rho\wedge\dbar\vp+\de\dbar\rho\right)\wedge\omega^{n-2}.
\end{align}
We will use  (\ref{fu-2})  to apply the maximum principle to the quantity
\begin{equation*}
Q = \log|\de\vp|_{g}^{2}+\frac{\vp}{B},
\end{equation*}
where $B>1$ is a large uniform constant to be determined later.

Assume that $Q$ achieves a maximum at $x_{0}$.
 Let $\{e_{i}\}_{i=1}^{n}$ be a local unitary frame in a neighbourhood of $x_{0}$ such that, at $x_{0}$,
\begin{equation}\label{tilde gij}
\tilde{g}_{i\ol{j}}
= \delta_{i\ol{j}}\tilde{g}_{i\ol{i}}
= \delta_{i\ol{j}}(e^{\vp}+\alpha e^{-\vp}\rho_{i\ov{i}}+2n\alpha \vp_{i\ov{i}}).
\end{equation}
For convenience, we use the following notation:
\begin{equation*}
\hat{\omega} = e^{-\vp}\ti{\omega},
\hat{g}_{i\ov{j}} = e^{-\vp}\ti{g}_{i\ov{j}} \text{~and~}
F^{i\ov{j}} = \frac{\partial\sigma_{2}(\hat{\omega})}{\de\hat{g}_{i\ov{j}}}.
\end{equation*}
Since $\ti{g}_{i\ov{j}}(x_{0})$ is diagonal at $x_0$, it  is easy to see that
\begin{equation}\label{Expression of Fij}
F^{i\ov{j}} = \delta_{ij}F^{i\ov{i}} = \delta_{ij}e^{-\vp}\sum_{k\neq i}\ti{g}_{k\ov{k}}.
\end{equation}
By the assumption of Proposition \ref{Gradient estimate}, at the expense of increasing $C_{0}$, we have
\begin{equation}\label{Small condition}
e^{-\vp}|\de\dbar\vp|_{g}
\leq M_{0}DA_{D}
\leq \frac{1}{1000B n^{3}|\alpha|}.
\end{equation}
Combining this with (\ref{tilde gij}) and (\ref{Expression of Fij}),   we get
\begin{equation}\label{Bound of Fij}
\left|F^{i\ov{i}}-(n-1)\right| \leq \frac{1}{100}.
\end{equation}

We need to estimate the lower bound of $F^{i\ov{j}}e_{i}e_{\ov{j}}(|\de\vp|_{g}^{2})$, where we  are summing over repeated indices.
 Note
\begin{equation*}
|\de\vp|_{g}^{2} = \sum_{k}\vp_{k}\vp_{\ov{k}},
\end{equation*}
where $\vp_{k}=e_{k}(\vp)$ and $\vp_{\ov{k}}=\ov{e}_{k}(\vp)$,
in the local frame $\{e_{i}\}_{i=1}^{n}$.  Then,  at $x_{0}$,
\begin{equation*}
\begin{split}
F^{i\ov{j}}e_{i}\ov{e}_{j}(|\de\vp|_{g}^{2})
= {} & \sum_{j}F^{i\ov{i}}(|e_{i}\ov{e}_{j}(\vp)|^{2}+|e_{i}e_{j}(\vp)|^{2}) \\
     & +\sum_{k}F^{i\ov{i}}\left(e_{i}\ov{e}_{i}e_{k}(\vp)\vp_{\ov{k}}+e_{i}\ov{e}_{i}\ov{e}_{k}(\vp)\vp_{k}\right).
\end{split}
\end{equation*}
On the other hand,  by the relation  (see e.g. \cite{HL15})
\begin{equation}\label{ddbar formula}
\vp_{i\ov{j}} = \de\dbar\vp(e_{i},\ov{e}_{j}) = e_{i}\ov{e}_{j}(\vp)-[e_{i},\ov{e}_{j}]^{(0,1)}(\vp),
\end{equation}
 we have
\begin{equation*}
\begin{split}
e_{k}(\vp_{i\ov{i}}) = {} & e_{k}e_{i}\ov{e}_{i}(\vp)-e_{k}[e_{i},\ov{e}_{i}]^{(0,1)}(\vp) \\
= {} & e_{i}\ov{e}_{i}e_{k}(\vp)+[e_{k},e_{i}]\ov{e}_{i}(\vp)+e_{i}[e_{k},\ov{e}_{i}](\vp)-e_{k}[e_{i},\ov{e}_{i}]^{(0,1)}(\vp).
\end{split}
\end{equation*}
Thus combining this with (\ref{Bound of Fij}), we get
\begin{equation*}
\begin{split}
        & \sum_{k}F^{i\ov{i}}\left(e_{i}\ov{e}_{i}e_{k}(\vp)\vp_{\ov{k}}+e_{i}\ov{e}_{i}\ov{e}_{k}(\vp)\vp_{k}\right) \\[3mm]
\geq {} & \sum_{k}F^{i\ov{i}}\left(e_{k}(\vp_{i\ov{i}})\vp_{\ov{k}}+\ov{e}_{k}(\vp_{i\ov{i}})\vp_{k}\right)
          -C|\de\vp|_{g}\sum_{i,j}(|e_{i}\ov{e}_{j}(\vp)|+|e_{i}e_{j}(\vp)|)-C|\de\vp|_{g}^{2} \\
\geq {} & \sum_{k}F^{i\ov{i}}\left(e_{k}(\vp_{i\ov{i}})\vp_{\ov{k}}+\ov{e}_{k}(\vp_{i\ov{i}})\vp_{k}\right)
          -\frac{1}{10}\sum_{i,j}(|e_{i}\ov{e}_{j}(\vp)|^{2}+|e_{i}e_{j}(\vp)|^{2})-C|\de\vp|_{g}^{2}.
\end{split}
\end{equation*}
Hence, we obtain
\begin{equation}\label{Maximum principle gradient 1}
\begin{split}
F^{i\ov{j}}e_{i}\ov{e}_{j}(|\de\vp|_{g}^{2})
\geq {} & \frac{4}{5}\sum_{i,j}(|e_{i}\ov{e}_{j}(\vp)|^{2}+|e_{i}e_{j}(\vp)|^{2})-C|\de\vp|_{g}^{2} \\
        & +\sum_{k}\left(F^{i\ov{i}}e_{k}(\vp_{i\ov{i}})\vp_{\ov{k}}+\ov{e}_{k}(\vp_{i\ov{i}})\vp_{k}\right).
\end{split}
\end{equation}

Next, we use equation (\ref{fu-2})  to deal with the third order terms in (\ref{Maximum principle gradient 1}).

\begin{lemma}\label{Gradient estimate lemma}
At $x_{0}$, we have
\begin{equation}\label{gradient-lapalce}
\begin{split}
        & \sum_{k}F^{i\ov{i}}\left(e_{k}(\vp_{i\ov{i}})\vp_{\ov{k}}+\ov{e}_{k}(\vp_{i\ov{i}})\vp_{k}\right) \\[2mm]
\geq {} & -\frac{1}{5}\sum_{i,j}(|e_{i}\ov{e}_{j}(\vp)|^{2}+|e_{i}e_{j}(\vp)|^{2})
          -2(n-1){\rm Re}\left(\sum_{k}(|\de\vp|_{g}^{2})_{k}\vp_{\ov{k}}\right) \notag\\[2mm]
        & -\left(Ce^{-\vp}+\frac{1}{B}\right)|\de\vp|_{g}^{4}-C|\de\vp|_{g}^{2}-C.
\end{split}
\end{equation}
\end{lemma}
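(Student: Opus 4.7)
The plan is to differentiate the equation $\sigma_2(\ti\omega) = F(z,\vp,\de\vp)$ along $e_k$, then multiply the resulting identity by $\vp_{\ov k}$ and add the complex conjugate. Since $\ti g_{i\ov i} = e^\vp g_{i\ov i} + \alpha e^{-\vp}\rho_{i\ov i} + 2n\alpha\vp_{i\ov i}$, computing $e_k(\ti g_{i\ov i})$ and using $\sum_i \ti F^{i\ov i} e_k(\ti g_{i\ov i}) = e_k F$ (where $\ti F^{i\ov j} = e^\vp F^{i\ov j}$ at $x_0$, as $\ti g$ is diagonal there) isolates
$$2n\alpha \sum_i \ti F^{i\ov i} e_k(\vp_{i\ov i}) = e_k F - e^\vp\vp_k\sum_i\ti F^{i\ov i} + \alpha e^{-\vp}\vp_k\sum_i\ti F^{i\ov i}\rho_{i\ov i} - \alpha e^{-\vp}\sum_i\ti F^{i\ov i} e_k(\rho_{i\ov i}).$$
The crucial observation is that the leading $e^{2\vp}\vp_k$ contributions on the right cancel: $e_k F$ produces $n(n-1)e^{2\vp}\vp_k$ from differentiating $\tfrac{n(n-1)}{2}e^{2\vp}$, while $e^\vp\vp_k\sum_i\ti F^{i\ov i}$ produces the same quantity via $\sum_i\ti F^{i\ov i} = (n-1)\mathrm{tr}_g\ti\omega = n(n-1)e^\vp + (\text{lower order})$. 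This cancellation is essential because $e^{2\vp}$ would otherwise dominate: by Proposition \ref{Zero order estimate}, $e^\vp\geq 1/(M_0 A)$ is large.

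After the cancellation, dividing by $2n\alpha e^\vp$, multiplying by $\vp_{\ov k}$, and summing over $k$ gives the leading surviving term $-(n-1)\sum_k e_k(|\de\vp|_g^2)\vp_{\ov k}$, which comes from differentiating the $-4\alpha e^\vp|\de\vp|_g^2$ piece of $F$; adding the conjugate produces precisely the $-2(n-1)\mathrm{Re}\sum_k(|\de\vp|_g^2)_k\vp_{\ov k}$ term appearing in the statement. The remaining contributions split into three types that I would treat separately: (i) subleading pieces of $\sum_i\ti F^{i\ov i}$ involving $\Delta\vp$ and $\mathrm{tr}_g\rho$, together with the $\vp_k|\de\vp|_g^2$ term from $e_k F$; (ii) the $e_k f$ piece, whose structure from (\ref{Definition of f}) gives $|e_k f|\lesssim |\vp_k|(1 + e^{-\vp}|\de\vp|_g^2) + e^{-\vp}|\de\vp|_g|\de\ov\de\vp|_g$ and hence produces $O(e^{-\vp})|\de\vp|_g^4$ after pairing with $\vp_{\ov k}$; (iii) commutator corrections from the unitary (non-holonomic) frame via (\ref{ddbar formula}), yielding curvature-weighted terms absorbable into $C|\de\vp|_g^2+C$.

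The main technical obstacle will be keeping the coefficient of $|\de\vp|_g^4$ on the right-hand side as small as the claimed $Ce^{-\vp}+\tfrac{1}{B}$. A naive Cauchy-Schwarz estimate on quantities such as $(n-1)|\Delta\vp||\de\vp|_g^2$ would yield an $O(1)|\de\vp|_g^4$ contribution, which would be useless for the gradient estimate. The key is that the smallness assumption (\ref{Small condition}), with its explicit factor $1/(1000Bn^3|\alpha|)$, converts the $O(e^\vp)$ bounds on $|\Delta\vp|$ and $|\de\ov\de\vp|_g$ into the required small factor $\tfrac{1}{B}$, while the $|\de\ov\de\vp|_g^2$-type squares produced by Cauchy-Schwarz are absorbed into the slack $-\tfrac{1}{5}\sum_{i,j}(|e_i\ov e_j\vp|^2+|e_i e_j\vp|^2)$. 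This careful calibration of the coefficients is precisely what will make the resulting gradient estimate in Proposition \ref{Gradient estimate} independent of the a priori bound $D$ for $|\de\ov\de\vp|_g$, thereby enabling the bootstrap with the second-order estimate.
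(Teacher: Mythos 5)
Your overall skeleton matches the paper's: the paper differentiates the normalized equation $\sigma_{2}(e^{-\vp}\ti{\omega})=e^{-2\vp}F$ (so the cancellation of the leading $e^{2\vp}\vp_{k}$ contributions that you describe is built in automatically, since the right-hand side becomes $\tfrac{n(n-1)}{2}(1-4\alpha e^{-\vp}|\de\vp|_{g}^{2}+e^{-2\vp}f)$ and $e_{k}(g_{i\ov{j}})=0$ in the unitary frame), then multiplies by $\vp_{\ov{k}}$, sums, and adds the conjugate; your treatment of the $f$-terms and the commutator corrections is also what the paper does.

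However, there is a genuine gap in your handling of what you call item (i), specifically the term $2|\de\vp|_{g}^{2}F^{i\ov{i}}\vp_{i\ov{i}}$, whose leading part is $2(n-1)|\de\vp|_{g}^{2}\Delta\vp$. You propose to control it via (\ref{Small condition}). But (\ref{Small condition}) only gives $|\Delta\vp|\leq n|\de\dbar\vp|_{g}\leq \frac{e^{\vp}}{1000Bn^{2}|\alpha|}$, so the best this mechanism yields is a term of size $\frac{e^{\vp}}{B}|\de\vp|_{g}^{2}$. Since $e^{\vp}\geq \frac{1}{M_{0}A}$ is enormous, this is \emph{not} absorbable into $\left(Ce^{-\vp}+\frac{1}{B}\right)|\de\vp|_{g}^{4}+C|\de\vp|_{g}^{2}+C$ plus the second-order slack (Cauchy--Schwarz against $\sum|e_{i}\ov{e}_{j}(\vp)|^{2}$ just reproduces the $O(1)|\de\vp|_{g}^{4}$ you yourself ruled out, and splitting $e^{\vp}|\de\vp|_{g}^{2}\leq \tfrac12 e^{2\vp}+\tfrac12|\de\vp|_{g}^{4}$ leaves a non-uniform constant $e^{2\vp}/B$). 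The paper's resolution is different and essential: it keeps $2(n-1)|\de\vp|_{g}^{2}\Delta\vp$ paired with the positive term $+2(n-1)|\de\vp|_{g}^{4}$ (coming from differentiating $-4\alpha e^{-\vp}|\de\vp|_{g}^{2}$), so that what must be bounded below is $2(n-1)|\de\vp|_{g}^{2}(\Delta\vp+|\de\vp|_{g}^{2})$, and then it invokes the equation (\ref{Fu-Yau equation}) a \emph{second} time, in trace form, to obtain $\Delta\vp+|\de\vp|_{g}^{2}\geq -n|\alpha|e^{-\vp}|\de\dbar\vp|_{g}^{2}-Ce^{-2\vp}|\de\dbar\vp|_{g}-Ce^{-2\vp}|\de\vp|_{g}^{2}-C$ (this is (\ref{Gradient estimate equation 2})). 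Only the resulting products such as $e^{-\vp}|\de\vp|_{g}^{2}|\de\dbar\vp|_{g}^{2}$ are then tamed by (\ref{Small condition}) and absorbed into $\frac{1}{B}|\de\vp|_{g}^{4}$ plus the second-order slack. Without this second use of the PDE, the $\Delta\vp$ term cannot be controlled, so as written your argument does not close.
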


\begin{proof}
By  (\ref{fu-2}),  we have
\begin{equation}\label{fu-3}
\sigma_{2}(e^{-\vp} \tilde\omega) = e^{-2\vp}F(z,\vp,\de\vp).
\end{equation}
Differentiating    (\ref{fu-3})   along $e_{k}$ at $x_{0}$, we  get
\begin{equation*}
\begin{split}
     & F^{i\ol{j}}e_{k}(g_{i\ol{j}}+\alpha e^{-2\vp}\rho_{i\ol{j}}+2n\alpha e^{-\vp}\vp_{i\ol{j}})\\
= {} & -2\alpha n(n-1)\left(-e^{-\vp}|\de\vp|^{2}_{g}\vp_{k}+e^{-\vp}(|\de\vp|^{2}_{g})_{k}\right)+\frac{n(n-1)}{2}(e^{-2\vp}f)_{k}.
\end{split}
\end{equation*}
Then
\begin{equation*}
\begin{split}
2n\alpha F^{i\ov{i}}e_{k}(\vp_{i\ov{i}})
= {} & 2\alpha e^{-\vp}\vp_{k}F^{i\ov{i}}\rho_{i\ov{i}}-\alpha e^{-\vp}F^{i\ov{i}}e_{k}(\rho_{i\ov{i}})+2n\alpha\vp_{k}F^{i\ov{i}}\vp_{i\ov{i}} \\[1mm]
     & -2\alpha n(n-1)(|\de\vp|_{g}^{2})_{k}+2\alpha n(n-1)|\de\vp|_{g}^{2}\vp_{k} \\
     & -n(n-1)e^{-\vp}f\vp_{k}+\frac{n(n-1)}{2}e^{-\vp}f_{k}.
\end{split}
\end{equation*}
It follows that
\begin{equation}\label{Gradient estimate equation 7}
\begin{split}
        & \sum_{k}F^{i\ov{i}}\left(e_{k}(\vp_{i\ov{i}})\vp_{\ov{k}}+\ov{e}_{k}(\vp_{i\ov{i}})\vp_{k}\right) \\[1mm]
   = {} & \frac{2}{n}e^{-\vp}|\de\vp|_{g}^{2}F^{i\ov{i}}\rho_{i\ov{i}}
          -\frac{1}{n}e^{-\vp}\textrm{Re}\left(\sum_{k}F^{i\ov{i}}e_{k}(\rho_{i\ov{i}})\vp_{\ov{k}}\right)
          +2|\de\vp|_{g}^{2}F^{i\ov{i}}\vp_{i\ov{i}} \\[1mm]
        & -2(n-1)\textrm{Re}\left(\sum_{k}(|\de\vp|_{g}^{2})_{k}\vp_{\ov{k}}\right)+2(n-1)|\de\vp|_{g}^{4}
          -\frac{n-1}{\alpha}e^{-\vp}|\de\vp|_{g}^{2}f \\
        & +\frac{n-1}{2\alpha}e^{-\vp}\textrm{Re}\left(\sum_{k}f_{k}\vp_{\ov{k}}\right) \\[1mm]
\geq {} & -Ce^{-\vp}|\de\vp|_{g}^{2}-Ce^{-\vp}|\de\vp|_{g}+2|\de\vp|_{g}^{2}F^{i\ov{i}}\vp_{i\ov{i}}
          -2(n-1)\textrm{Re}\left(\sum_{k}(|\de\vp|_{g}^{2})_{k}\vp_{\ov{k}}\right) \\[1mm]
        & +2(n-1)|\de\vp|_{g}^{4}-\frac{n-1}{\alpha}e^{-\vp}|\de\vp|_{g}^{2}f
          +\frac{n-1}{2\alpha}e^{-\vp}\textrm{Re}\left(\sum_{k}f_{k}\vp_{\ov{k}}\right),
\end{split}
\end{equation}
where we used (\ref{Bound of Fij}) in the last inequality.  On the other hand, by (\ref{Definition of f}),  a direct calculation shows that
\begin{equation}\label{Gradient estimate equation 8}
\begin{split}
& -\frac{n-1}{\alpha}e^{-\vp}|\de\vp|_{g}^{2}f+\frac{n-1}{2\alpha}e^{-\vp}\textrm{Re}\left(\sum_{k}f_{k}\vp_{\ov{k}}\right) \\[1mm]
\geq {} & -C\left(e^{-2\vp}|\de\vp|^{2}_{g}+e^{-2\vp}|\de\vp|_{g}\right)\sum_{i,j}(|e_{i}\ov{e}_{j}(\vp)|+|e_{i}e_{j}(\vp)|)-Ce^{-\vp}|\de\vp|_{g}^{4} \\
        & -Ce^{-\vp}|\de\vp|_{g}^{3}-Ce^{-\vp}|\de\vp|_{g}^{2}-Ce^{-\vp}|\de\vp|_{g} \\[1mm]
\geq {} & -\frac{1}{10}\sum_{i,j}(|e_{i}\ov{e}_{j}(\vp)|^{2}+|e_{i}e_{j}(\vp)|^{2})-Ce^{-\vp}|\de\vp|_{g}^{4}-Ce^{-\vp},
\end{split}
\end{equation}
where we used the Cauchy-Schwarz inequality in the last inequality.  Thus substituting (\ref{Gradient estimate equation 8}) into (\ref{Gradient estimate equation 7}), we  derive
\begin{equation}\label{Gradient estimate equation 9}
\begin{split}
        & \sum_{k}F^{i\ov{i}}\left(e_{k}(\vp_{i\ov{i}})\vp_{\ov{k}}+\ov{e}_{k}(\vp_{i\ov{i}})\vp_{k}\right) \\[2mm]
\geq {} & 2|\de\vp|_{g}^{2}F^{i\ov{i}}\vp_{i\ov{i}}+2(n-1)|\de\vp|_{g}^{4}
          -2(n-1)\textrm{Re}\left(\sum_{k}(|\de\vp|_{g}^{2})_{k}\vp_{\ov{k}}\right) \\[2mm]
        & -\frac{1}{10}\sum_{i,j}(|e_{i}\ov{e}_{j}(\vp)|^{2}+|e_{i}e_{j}(\vp)|^{2})-Ce^{-\vp}|\de\vp|_{g}^{4}-Ce^{-\vp}.
\end{split}
\end{equation}

By (\ref{Expression of Fij}), we  have
\begin{equation}\label{Gradient estimate equation 1}
\begin{split}
        & 2|\de\vp|_{g}^{2}F^{i\ol{i}}\vp_{i\ol{i}} \\[2mm]
   &= 2|\de\vp|^{2}_{g}\sum_{i}\sum_{k\neq i}(g_{k\ol{k}}+\alpha e^{-2\vp}\rho_{k\ol{k}}+2n\alpha e^{-\vp}\vp_{k\ol{k}})\vp_{i\ol{i}} \\
  & = 2(n-1)|\de\vp|_{g}^{2}\Delta\vp+4n\alpha|\de\vp|_{g}^{2}e^{-\vp}\sum_{i\neq k}\vp_{i\ov{i}}\vp_{k\ov{k}}
          +2\alpha e^{-2\vp}|\de\vp|_{g}^{2}\sum_{i\neq k}\vp_{i\ov{i}}\rho_{k\ov{k}} \\
&\geq  2(n-1)|\de\vp|_{g}^{2}\Delta\vp-4n^{2}(n-1)|\alpha|e^{-\vp}|\de\vp|_{g}^{2}|\de\dbar\vp|_{g}^{2}
          -Ce^{-2\vp}|\de\vp|^{2}_{g}|\de\dbar\vp|_{g}.
\end{split}
\end{equation}
Note that by (\ref{Fu-Yau equation}) it holds
\begin{equation*}
\frac{\ddbar(e^{\vp}\omega-\alpha e^{-\vp}\rho)\wedge\omega^{n-2}}{\omega^{n}}
\geq -n|\alpha||\de\dbar\vp|_{g}^{2}-C,
\end{equation*}
which implies
\begin{equation}\label{Gradient estimate equation 2}
\Delta\vp+|\de\vp|_{g}^{2}
\geq -n|\alpha|e^{-\vp}|\de\dbar\vp|_{g}^{2}-Ce^{-2\vp}|\de\dbar\vp|_{g}-Ce^{-2\vp}|\de\vp|_{g}^{2}-C.
\end{equation}
Then substituting (\ref{Gradient estimate equation 2}) into (\ref{Gradient estimate equation 1}),  we get
\begin{equation*}
\begin{split}
2|\de\vp|_{g}^{2}F^{i\ov{i}}\vp_{i\ov{i}}
\geq {} & -2(n-1)|\de\vp|_{g}^{4}-5n^{3}|\alpha|e^{-\vp}|\de\vp|_{g}^{2}|\de\dbar\vp|_{g}^{2} \\
     {} & -Ce^{-2\vp}|\de\vp|_{g}^{2}|\de\dbar\vp|_{g}-Ce^{-2\vp}|\de\vp|_{g}^{4}-C|\de\vp|_{g}^{2}.
\end{split}
\end{equation*}
Thus  by  (\ref{Small condition}) and the Cauchy-Schwarz inequality, we derive
\begin{equation}\label{Gradient estimate equation 10}
\begin{split}
& 2|\de\vp|_{g}^{2}F^{i\ov{i}}\vp_{i\ov{i}}+2(n-1)|\de\vp|_{g}^{4} \\[2mm]
\geq {} & -5n^{3}|\alpha|(e^{-\vp}|\de\dbar\vp|_{g})(|\de\vp|_{g}^{4}+|\de\dbar\vp|_{g}^{2}) \\[2mm]
        & -C(e^{-\vp}|\de\dbar\vp|_{g})(e^{-\vp}|\de\vp|_{g}^{2})-Ce^{-2\vp}|\de\vp|_{g}^{4}-C|\de\vp|_{g}^{2} \\[2mm]
\geq {} & -\frac{1}{B}|\de\dbar\vp|_{g}^{2}-\left(Ce^{-2\vp}+\frac{1}{B}\right)|\de\vp|_{g}^{4}-C|\de\vp|_{g}^{2} \\
\geq {} & -\frac{1}{10}\sum_{i,j}|e_{i}\ov{e}_{j}(\vp)|^{2}-\left(Ce^{-2\vp}+\frac{1}{B}\right)|\de\vp|_{g}^{4}-C|\de\vp|_{g}^{2}.
\end{split}
\end{equation}
Combining  (\ref{Gradient estimate equation 9}) and (\ref{Gradient estimate equation 10}), we prove Lemma \ref{Gradient estimate lemma}  immediately.
\end{proof}

By  (\ref{Maximum principle gradient 1}) and Lemma  \ref{Gradient estimate lemma},   we  get a  lower bound for $F^{i\ov{j}}e_{i}\ov{e}_{j}(|\de\vp|_{g}^{2})$ at $x_{0}$ as follows,
\begin{equation}\label{Maximum principle gradient 2}
\begin{split}
&F^{i\ov{j}}e_{i}\ov{e}_{j}(|\de\vp|_{g}^{2})\\
&\geq {}  \frac{3}{5}\sum_{i,j}(|e_{i}\ov{e}_{j}(\vp)|^{2}+|e_{i}e_{j}(\vp)|^{2})
          -2(n-1)\textrm{Re}\left(\sum_{k}(|\de\vp|_{g}^{2})_{k}\vp_{\ov{k}}\right) \\[2mm]
        & -\left(Ce^{-\vp}+\frac{1}{B}\right)|\de\vp|_{g}^{4}-C|\de\vp|_{g}^{2}-C.
\end{split}
\end{equation}
Now we are in a position to prove Proposition \ref{Gradient estimate}.

\begin{proof}[Proof of Proposition \ref{Gradient estimate}]
Without loss of generality, we assume that $|\de\vp|_{g}^{2}\geq1$.
By (\ref{Maximum principle gradient 2}) and the maximum principle, at $x_{0}$,  we see that
\begin{equation}\label{Gradient estimate equation 3}
\begin{split}
0 \geq {} & F^{i\ov{j}}e_{i}\ov{e}_{j}(Q) \\
    =  {} & \frac{F^{i\ov{j}}e_{i}\ov{e}_{j}(|\de\vp|_{g}^{2})}{|\de\vp|_{g}^{2}}
            -\frac{F^{i\ov{j}}e_{i}(|\de\vp|_{g}^{2})\ov{e}_{j}(|\de\vp|_{g}^{2})}{|\de\vp|_{g}^{4}}
            +\frac{1}{B}F^{i\ov{j}}e_{i}\ov{e}_{j}(\vp) \\
  \geq {} & \frac{1}{2|\de\vp|_{g}^{2}}\sum_{i,j}(|e_{i}\ov{e}_{j}(\vp)|^{2}+|e_{i}e_{j}(\vp)|^{2})
            -\frac{2(n-1)\textrm{Re}\left(\sum_{k}(|\de\vp|_{g}^{2})_{k}\vp_{\ov{k}}\right)}{|\de\vp|_{g}^{2}} \\
          & -\frac{F^{i\ov{i}}|e_{i}(|\de\vp|_{g}^{2})|^{2}}{|\de\vp|_{g}^{4}}
            -\left(Ce^{-\vp}+\frac{1}{B}\right)|\de\vp|_{g}^{2}-C+\frac{1}{B}F^{i\ov{i}}e_{i}\ov{e}_{i}(\vp).
\end{split}
\end{equation}
The second and third terms in (\ref{Gradient estimate equation 3}) can be controlled  by   the relation $dQ(x_{0})=0$.  Namely, we have
\begin{equation}\label{Gradient estimate equation 4}
-\frac{2(n-1)\textrm{Re}\left(\sum_{k}(|\de\vp|_{g}^{2})_{k}\vp_{\ov{k}}\right)}{|\de\vp|_{g}^{2}}
=\frac{2(n-1)}{B}|\de\vp|_{g}^{2}
\end{equation}
and
\begin{equation}\label{Gradient estimate equation 5}
-\frac{F^{i\ov{i}}|e_{i}(|\de\vp|_{g}^{2})|^{2}}{|\de\vp|_{g}^{4}}
= -\frac{1}{B^{2}}F^{i\ov{i}}\vp_{i}\vp_{\ov{i}}
\geq -\frac{C}{B^{2}}|\de\vp|_{g}^{2},
\end{equation}
where we used (\ref{Bound of Fij}) in the last inequality.  On the other hand,  by  (\ref{Bound of Fij}) and the Cauchy-Schwarz inequality, we have
\begin{equation}\label{Gradient estimate equation 6}
\frac{1}{B}F^{i\ov{i}}e_{i}\ov{e}_{i}(\vp)
\geq -\frac{1}{4|\de\vp|_{g}^{2}}\sum_{i,j}|e_{i}\ov{e}_{j}(\vp)|^{2}-\frac{C}{B^{2}}|\de\vp|_{g}^{2}.
\end{equation}
Thus substituting (\ref{Gradient estimate equation 4}), (\ref{Gradient estimate equation 5}), (\ref{Gradient estimate equation 6}) into (\ref{Gradient estimate equation 3}), we get
\begin{equation}\label{dradient-lalpace-2}
\begin{split}
0 \geq {} & \frac{1}{4|\de\vp|_{g}^{2}}\sum_{i,j}\left(|e_{i}\ov{e}_{j}(\vp)|^{2}+|e_{i}e_{j}(\vp)|^{2}\right)-C_{0} \\
          & +\left(\frac{2n-3}{B}-\frac{C_{0}}{B^{2}}-C_{0}e^{-\vp}\right)|\de\vp|_{g}^{2},
\end{split}
\end{equation}
where $C_{0}$ is a uniform constant.

We choose the number $B = 2C_{0}$ in (\ref{dradient-lalpace-2}). Moreover, by the assumption  in the proposition we  may also assume
\begin{equation*}
C_{0}e^{-\vp} \leq \frac{1}{8C_{0}}.
\end{equation*}
Then,  we get
\begin{equation*}
|\de\vp|_{g}^{2}(x_{0}) \leq 8C_{0}^{2}.
\end{equation*}
Hence, by Proposition \ref{Zero order estimate},  we obtain
\begin{equation*}
\max_{M}|\de\vp|_{g}^{2} \leq e^{\frac{1}{B}(\sup_{M}\vp-\inf_{M}\vp)}|\de\vp|_{g}^{2}(x_{0}) \leq C,
\end{equation*}
as desired.
\end{proof}

The following  lemma  will  be used  in the next section.
\begin{lemma}\label{Maximum principle gradient lemma}
For a uniform constant $C_{1}$, we have
\begin{equation*}
F^{i\ov{j}}e_{i}\ov{e}_{j}(|\de\vp|_{g}^{2})
\geq \frac{1}{2}\sum_{i,j}(|e_{i}\ov{e}_{j}(\vp)|^{2}+|e_{i}e_{j}(\vp)|^{2})-C_{1}.
\end{equation*}
\end{lemma}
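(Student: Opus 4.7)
The plan is to observe that the inequality (\ref{Maximum principle gradient 2}) derived inside the proof of Proposition \ref{Gradient estimate} is in fact a \emph{pointwise} statement, valid at every point of $M$ under the standing hypotheses, not just at the maximum of $Q$. Indeed, the derivation of (\ref{Maximum principle gradient 1}) and Lemma \ref{Gradient estimate lemma} only uses: the choice of a local unitary frame diagonalizing $\ti{g}_{i\ov{j}}$ at the point of evaluation, the bound (\ref{Bound of Fij}) coming from the smallness condition (\ref{Small condition}) (which holds globally under the hypotheses of Proposition \ref{Gradient estimate}), the Cauchy--Schwarz inequality, and the differentiated Fu--Yau equation (\ref{fu-2}). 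No maximum-principle input enters any of these steps, so (\ref{Maximum principle gradient 2}) can be read as a pointwise inequality.

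Having (\ref{Maximum principle gradient 2}) pointwise, I would next invoke Proposition \ref{Gradient estimate} and Proposition \ref{Zero order estimate}, which under the hypotheses at hand already give the uniform bounds $|\de\vp|_{g}^{2}\le M_{1}$ and $e^{-\vp}\le M_{0}A$. Consequently the three terms
\[
-\left(Ce^{-\vp}+\tfrac{1}{B}\right)|\de\vp|_{g}^{4}-C|\de\vp|_{g}^{2}-C
\]
appearing on the right of (\ref{Maximum principle gradient 2}) are all bounded from below by a single uniform constant, and so may be absorbed into $-C_{1}$.

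The only remaining task is to control the first-order cross term
\[
-2(n-1)\,\textrm{Re}\Bigl(\sum_{k}(|\de\vp|_{g}^{2})_{k}\vp_{\ov{k}}\Bigr),
\]
which in the proof of Proposition \ref{Gradient estimate} was handled using $dQ(x_{0})=0$. Without that condition, I would differentiate $|\de\vp|_{g}^{2}=g^{i\ov{j}}\vp_{i}\vp_{\ov{j}}$ in the chosen frame to get
\[
e_{k}(|\de\vp|_{g}^{2})=\sum_{i}\bigl(e_{k}e_{i}(\vp)\,\vp_{\ov{i}}+e_{k}\ov{e}_{i}(\vp)\,\vp_{i}\bigr)+O\bigl(|\de\vp|_{g}^{2}\bigr),
\]
with the error coming from derivatives of the frame (uniformly bounded thanks to Proposition \ref{Gradient estimate}). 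A straightforward Cauchy--Schwarz then yields
\[
2(n-1)\Bigl|\textrm{Re}\sum_{k}(|\de\vp|_{g}^{2})_{k}\vp_{\ov{k}}\Bigr|\le \tfrac{1}{10}\sum_{i,j}\bigl(|e_{i}\ov{e}_{j}(\vp)|^{2}+|e_{i}e_{j}(\vp)|^{2}\bigr)+C|\de\vp|_{g}^{4}+C,
\]
and once more the $|\de\vp|_{g}^{4}$ term is a uniform constant by the gradient estimate. Combining everything leaves the coefficient $3/5-1/10=1/2$ in front of $\sum_{i,j}(|e_{i}\ov{e}_{j}(\vp)|^{2}+|e_{i}e_{j}(\vp)|^{2})$ and a uniform constant $-C_{1}$, exactly as claimed. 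No genuine obstacle arises here: the lemma is essentially a bookkeeping reuse of the computation behind Proposition \ref{Gradient estimate}, with the maximum-principle condition replaced by the uniform bound on $|\de\vp|_{g}$ for absorbing the cross term.
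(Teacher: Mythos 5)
Your proposal is correct and follows essentially the same route as the paper: the paper's own proof consists of citing (\ref{Maximum principle gradient 2}), Proposition \ref{Gradient estimate}, and the Cauchy--Schwarz inequality, which is exactly what you do, with the (correct and worth making explicit) observation that (\ref{Maximum principle gradient 2}) is pointwise since its derivation never uses $dQ(x_{0})=0$. Your only cosmetic deviation is the $O(|\de\vp|_{g}^{2})$ frame-derivative error in expanding $e_{k}(|\de\vp|_{g}^{2})$, which is harmless (and in fact absent in a unitary frame), and your coefficient bookkeeping $3/5-1/10=1/2$ matches the stated constant.
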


\begin{proof}
This lemma is an immediate consequence of (\ref{Maximum principle gradient 2}), Proposition \ref{Gradient estimate} and the Cauchy-Schwarz inequality.
\end{proof}

\section{Second order estimate}

This section is denoted to  $C^2$-estimate. We prove

\begin{proposition}\label{Second order estimate}
Let $\vp$ be a solution of (\ref{Fu-Yau equation}) satisfying (\ref{Elliptic condition}) and $\frac{A}{M_0}\leq e^{-\vp}\leq M_0A$ for some uniform constant $M_0$. There exist uniform constants $D_{0}$ and $C_{0}$ such that if
\begin{equation*}
|\de\dbar\vp|_{g} \leq D, ~~D_{0}\leq D \text{~and~} A\leq A_{D}:=\frac{1}{C_{0}M_{0}D},
\end{equation*}
then
\begin{equation*}
|\de\dbar\vp|_{g} \leq \frac{D}{2}.
\end{equation*}
\end{proposition}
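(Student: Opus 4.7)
The plan is to apply the maximum principle to a test function combining the largest eigenvalue of $\tilde g$ with a gradient term,
\begin{equation*}
Q = \log \lambda_{1}(\tilde g) + \tau |\nabla \varphi|_{g}^{2},
\end{equation*}
where $\tau > 0$ is a small uniform constant to be chosen at the end. Since $\tilde g_{i\ov{j}} = e^{\vp} g_{i\ov{j}} + \alpha e^{-\vp}\rho_{i\ov{j}} + 2n\alpha\,\vp_{i\ov{j}}$ and $e^{\vp} \gg e^{-\vp}|\rho|_{g}$ by Proposition~\ref{Zero order estimate}, controlling $\lambda_{1}(\tilde g)$ pointwise is equivalent to controlling $|\de\dbar\vp|_{g}$ up to uniform additive constants. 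At a maximum point $x_{0}$ of $Q$, I would diagonalize $\tilde g$ in a local unitary frame with $\tilde g_{1\ov{1}} = \lambda_{1}$, using the standard perturbation trick to handle non-smoothness of $\lambda_{1}$ when eigenvalues coincide.

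The inequality $F^{i\ov{j}}Q_{i\ov{j}}(x_{0}) \le 0$ then splits into three pieces: (i) the Hessian of $\log \lambda_{1}$, which via the twice-differentiated equation (\ref{fu-2}) produces third-order error terms carrying factors of $e^{-\vp}$ or $|\nabla\vp|_{g}$ (the usual $F^{i\ov{i}}|\tilde g_{1\ov{1},i}|^{2}/\lambda_{1}^{2}$ ``bad'' term being partly absorbed by the concavity-type inequality for $\sigma_{2}^{1/2}$); (ii) $\tau F^{i\ov{j}}(|\nabla\vp|_{g}^{2})_{i\ov{j}}$, bounded below by Lemma~\ref{Maximum principle gradient lemma} as $\tfrac{\tau}{2}\sum_{i,j}(|e_{i}\ov{e}_{j}(\vp)|^{2}+|e_{i}e_{j}(\vp)|^{2}) - C\tau \ge c\tau\,\lambda_{1}^{2} - C\tau$; and (iii) first-order cross terms eliminated using $dQ(x_{0})=0$ in the standard way.

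The crucial feature is that every error term arising from the $\vp,\de\vp$-dependence of the right-hand side of (\ref{fu-2})---including all derivatives of $f$ in (\ref{Definition of f})---carries a factor of $e^{-\vp}$ or of $|\nabla\vp|_{g}$. The $C^{0}$-estimate of Proposition~\ref{Zero order estimate} gives $e^{-\vp} \le M_{0}A \le 1/(C_{0}D)$, and the gradient estimate of Proposition~\ref{Gradient estimate} gives $|\nabla\vp|_{g} \le M_{1}$ \emph{independent of $D$}. Under the hypothesis $|\de\dbar\vp|_{g} \le D$, each error term is therefore bounded by $CD/C_{0}$; combined with (\ref{Bound of Fij}), the inequality $F^{i\ov{j}}Q_{i\ov{j}}(x_{0})\leq 0$ reduces to
\begin{equation*}
c\tau\,\lambda_{1}(x_{0})^{2} \,\le\, C_{\tau} + \frac{C_{\tau}}{C_{0}}\,D^{2},
\end{equation*}
so that $\lambda_{1}(x_{0}) \le C'_{\tau}(1 + D/\sqrt{C_{0}})$. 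Propagating this over $M$ via $Q$ and translating back to $|\de\dbar\vp|_{g}$ gives a global bound $|\de\dbar\vp|_{g} \le C''_{\tau}(1 + D/\sqrt{C_{0}})$; taking $C_{0}$ large enough that $C''_{\tau}D/\sqrt{C_{0}} \le D/4$ and $D_{0}$ large enough that $C''_{\tau} \le D_{0}/4 \le D/4$ gives $|\de\dbar\vp|_{g} \le D/2$.

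The main obstacle I anticipate is the careful bookkeeping in step (i): because the right-hand side of (\ref{fu-2}) depends on $\de\vp$, differentiating twice produces third-order mixed terms genuinely absent from the Hou--Ma--Wu setup for $\sigma_{2}(\omega+\ddbar\vp)=F(z)$. Tracking the precise power of $e^{-\vp}$ and $|\nabla\vp|_{g}$ in each such term, so that the smallness of $A$ combined with the $D$-independent gradient estimate suffices to absorb everything into $c\tau\,\lambda_{1}^{2}$, is the core technical point, and is exactly where the freedom to choose $C_{0}$ arbitrarily large in $A_{D} = 1/(C_{0}M_{0}D)$ is exploited.
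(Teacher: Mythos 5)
Your overall strategy---a maximum principle argument for a second-order quantity coupled with a gradient term, exploiting that $e^{-\vp}\le M_0A\le 1/(C_0D)$ while the gradient bound of Proposition~\ref{Gradient estimate} is independent of $D$---is the right general idea, and it is the idea the paper uses. But your specific test function $Q=\log\lambda_1(\tilde g)+\tau|\nabla\vp|_g^2$ with a \emph{fixed small} $\tau$ has a genuine gap at exactly the point you flag as the ``core technical point.'' Differentiating (\ref{fu-3}) twice produces, from the term $F^{i\ov{j},p\ov{q}}e_k(e^{-\vp}\ti{g}_{i\ov{j}})\ov{e}_l(e^{-\vp}\ti{g}_{p\ov{q}})$ and from the $\de\vp$-dependence of the right-hand side, error terms that are \emph{quadratic} and \emph{linear} in the full set of third derivatives $e_pe_i\ov{e}_j(\vp)$ (see the terms $16n^2\alpha^2e^{-2\vp}\sum_{i,j,p}|e_pe_i\ov{e}_j(\vp)|^2$ and $C\sum_{i,j,p}|e_pe_i\ov{e}_j(\vp)|$ in Lemma~\ref{Second order estimate lemma}). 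These are not bounded by any function of $D$, so your assertion that ``each error term is bounded by $CD/C_0$'' and the resulting inequality $c\tau\lambda_1^2\le C_\tau+C_\tau D^2/C_0$ are unjustified. To absorb them one needs a \emph{positive} term comparable to $\sum_{p,i,j}|e_pe_i\ov{e}_j(\vp)|^2$. The function $\log\lambda_1$ does not supply one: the only third-order terms it generates are the negative $-F^{i\ov{i}}|e_i(\lambda_1)|^2/\lambda_1^2$ and the indefinite (for $\sigma_2$, non-concave) $F^{i\ov{j},p\ov{q}}$ contraction in the single direction $e_1$; concavity of $\sigma_2^{1/2}$ cannot rescue the quadratic third-order term coming from the right-hand side's gradient dependence, which involves all directions.

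The paper's proof avoids this by taking $Q=|\de\dbar\vp|_g^2+B|\de\vp|_g^2$. The square $|\de\dbar\vp|_g^2$ produces, upon applying $F^{i\ov{i}}e_i\ov{e}_i$, the good term $2\sum_{k,l}F^{i\ov{i}}|e_i(\vp_{k\ov{l}})|^2\gtrsim\tfrac12\sum_{p,i,j}|e_pe_i\ov{e}_j(\vp)|^2$, and the hypothesis $A\le 1/(C_0M_0D)$ makes the coefficient $8n^3|\alpha|\,e^{-\vp}|\de\dbar\vp|_g$ of the bad quadratic third-order term smaller than $1/4$, so it is absorbed. A second structural difference: pairing the twice-differentiated equation with $\vp_{l\ov{k}}$ produces a bad term $-C_0(D+1)\sum_{i,j}(|e_i\ov{e}_j(\vp)|^2+|e_ie_j(\vp)|^2)$, which is dominated only because the gradient weight is chosen \emph{proportional to $D$} ($B=8C_0D+8C_0$), not fixed; the price $B\cdot C\le CD$ paid when propagating the maximum is then absorbed by $D^2/4$ once $D\ge D_0$. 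Your fixed $\tau$ cannot play this role, and your final step of ``choosing $C_0$ large depending on $C''_\tau$'' rests on the unestablished quantitative bound above. To repair your argument you would essentially have to replace $\log\lambda_1$ by $|\de\dbar\vp|_g^2$ (or otherwise manufacture the full positive third-order square sum) and let the gradient weight grow with $D$, at which point you recover the paper's proof.
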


We consider the following quantity
\begin{equation*}
Q = |\de\dbar\vp|_{g}^{2} + B|\de\vp|_{g}^{2},
\end{equation*}
where $B>1$ is a uniform constant to be determined later.  As in Section 3,  we assume that $Q(x_{0})=\max_{M}Q$ and a local $g$-unitary frame $\{e_{i}\}_{i=1}^{n}$ for $T_{\mathbb{C}}^{(1,0)}M$  around $x_{0}$  such that $\ti{g}_{i\ov{j}}(x_{0})$ is diagonal.  By  the following notations,
\begin{equation*}
\hat{\omega} = e^{-\vp}\ti{\omega},
\hat{g}_{i\ov{j}} = e^{-\vp}\ti{g}_{i\ov{j}},
F^{i\ov{j}} = \frac{\de{\sigma_{2}(\hat{\omega})}}{\de\hat{g}_{i\ov{j}}}
\text{~and~}
F^{i\ov{j},k\ov{l}} = \frac{\de^{2}{\sigma_{2}(\hat{\omega})}}{\de\hat{g}_{i\ov{j}}\de\hat{g}_{k\ov{l}}},
\end{equation*}
we have
\begin{equation*}
F^{i\ov{j}} = \delta_{ij}F^{i\ov{i}} = \delta_{ij}e^{-\vp}\sum_{k\neq i}\ti{g}_{k\ov{k}}
\end{equation*}
and
\begin{equation*}
F^{i\ov{j},k\ov{l}}=
\left\{\begin{array}{ll}
 1, & \text{if $i=j$, $k=l$, $i\neq k$;}\\[1mm]
-1, & \text{if $i=l$, $k=j$, $i\neq k$;}\\[1mm]
 0, & \text{\quad\quad~otherwise.}
\end{array}\right.
\end{equation*}
By the assumption of Proposition \ref{Second order estimate}, at the expense of increasing $C_{0}$, we may also assume that
\begin{equation}\label{Second order estimate equation 6}
e^{-\vp}|\de\dbar\vp|_{g}\leq \frac{1}{1000n^{3}|\alpha|B}.
\end{equation}
Hence, we get
\begin{equation}\label{Second order estimate equation 1}
|F^{i\ov{i}}-(n-1)| \leq \frac{1}{100}
\text{~and~} |F^{i\ov{j},k\ov{l}}|\leq 1.
\end{equation}

We need the following lemma.

\begin{lemma}\label{Second order estimate lemma}
At $x_{0}$, we have
\begin{equation*}
\begin{split}
|F^{i\ov{i}}e_{i}\ov{e}_{i}(\vp_{k\ov{l}})|
\leq {} & 8n|\alpha| e^{-\vp}\sum_{i,j,p}|e_{p}e_{i}\ov{e}_{j}(\vp)|^{2}+C\sum_{i,j,p}|e_{p}e_{i}\ov{e}_{j}(\vp)| \\
        & +C\sum_{i,j}(|e_{i}\ov{e}_{j}(\vp)|^{2}+|e_{i}e_{j}(\vp)|^{2})+C.
\end{split}
\end{equation*}
\end{lemma}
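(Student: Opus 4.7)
My plan is to differentiate equation (\ref{fu-3}), which rewrites (\ref{fu-2}) as $\sigma_{2}(\hat{\omega})=e^{-2\vp}F(z,\vp,\de\vp)$, once in the direction $e_{k}$ and once in $\ov{e}_{l}$, isolate the leading fourth-order term involving $F^{i\ov{j}}\vp_{i\ov{j}k\ov{l}}$, and then pass from $\vp_{i\ov{i}k\ov{l}}$ to $e_{i}\ov{e}_{i}(\vp_{k\ov{l}})$ using the commutator formula (\ref{ddbar formula}) applied twice. Because the assumption (\ref{Second order estimate equation 6}) already provides good pointwise control on $e^{-\vp}$, $|\de\vp|_{g}$ and $|\de\dbar\vp|_{g}$, all the manipulations below are allowed to generate uniform constants and small multiples of the quantities appearing on the right-hand side of the lemma.

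Concretely, differentiating (\ref{fu-3}) I would obtain
\begin{equation*}
F^{i\ov{j}}\hat{g}_{i\ov{j},k\ov{l}}+F^{i\ov{j},p\ov{q}}\hat{g}_{i\ov{j},k}\hat{g}_{p\ov{q},\ov{l}}=(e^{-2\vp}F)_{k\ov{l}}.
\end{equation*}
Since $\hat{g}_{i\ov{j}}=g_{i\ov{j}}+\alpha e^{-2\vp}\rho_{i\ov{j}}+2n\alpha e^{-\vp}\vp_{i\ov{j}}$, the only fourth-order piece of $\hat{g}_{i\ov{j},k\ov{l}}$ is $2n\alpha e^{-\vp}\vp_{i\ov{j}k\ov{l}}$; every other contribution to $\hat g_{i\ov j,k\ov l}$ is at most cubic in $\vp$ and will be absorbed into lower-order terms. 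Solving for $F^{i\ov{i}}\vp_{i\ov{i}k\ov{l}}$, dividing by $2n\alpha e^{-\vp}$, and then applying (\ref{ddbar formula}) to interchange the derivatives converts the quantity $F^{i\ov{i}}\vp_{i\ov{i}k\ov{l}}$ to $F^{i\ov{i}}e_{i}\ov{e}_{i}(\vp_{k\ov{l}})$. The commutator errors produced in this conversion are schematically of the form $R\ast\vp_{i\ov{j}k}$, $R\ast\vp_{i\ov{j}}$ and $R\ast\vp_{i}$, and by Propositions \ref{Zero order estimate} and \ref{Gradient estimate} they fall into the $C\sum|e_{p}e_{i}\ov{e}_{j}(\vp)|$, $C\sum(|e_{i}\ov{e}_{j}(\vp)|^{2}+|e_{i}e_{j}(\vp)|^{2})$, and additive constant slots of the lemma.

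The leading third-derivative-squared term then comes from the Hessian piece $F^{i\ov{j},p\ov{q}}\hat{g}_{i\ov{j},k}\hat{g}_{p\ov{q},\ov{l}}$: writing $\hat{g}_{i\ov{j},k}=2n\alpha e^{-\vp}\vp_{i\ov{j}k}+O(e^{-\vp})$, using $|F^{i\ov{j},p\ov{q}}|\leq 1$ with only $2n(n-1)$ nonzero entries, and applying the AM-GM inequality $|\vp_{i\ov{j}k}||\vp_{p\ov{q}\ov{l}}|\leq\tfrac{1}{2}(|\vp_{i\ov{j}k}|^{2}+|\vp_{p\ov{q}\ov{l}}|^{2})$, one obtains after division by $|2n\alpha e^{-\vp}|$ precisely the $8n|\alpha|e^{-\vp}\sum_{i,j,p}|e_{p}e_{i}\ov{e}_{j}(\vp)|^{2}$ term. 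The remaining contribution $(e^{-2\vp}F)_{k\ov{l}}/(2n\alpha e^{-\vp})$, expanded through the explicit formula (\ref{Definition of f}) for $f$ and bounded using Proposition \ref{Zero order estimate} together with Proposition \ref{Gradient estimate} to control $e^{-\vp}$ and $|\de\vp|_{g}$, produces only single powers of third derivatives, yielding the $C\sum_{i,j,p}|e_{p}e_{i}\ov{e}_{j}(\vp)|$ slot, together with second-order squares and additive constants.

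The main technical obstacle is extracting the sharp constant $8n|\alpha|$ rather than some uncontrolled multiple of $n|\alpha|$. This forces careful pairing of the two types of nonzero entries in $F^{i\ov{j},p\ov{q}}$ with the symmetric AM-GM step, and the cross terms of the form $(2n\alpha e^{-\vp}\vp_{i\ov{j}k})\cdot O(e^{-\vp})$ coming from the lower-order part of $\hat{g}_{i\ov{j},k}$ must be split by a further Cauchy-Schwarz inequality that exploits the smallness assumption (\ref{Second order estimate equation 6}), so that they land inside $C\sum|e_{p}e_{i}\ov{e}_{j}(\vp)|$ rather than polluting the leading coefficient. Keeping this bookkeeping clean, so that the prefactor is exactly $8n|\alpha|$, is the delicate point that will make the resulting estimate strong enough to drive the second-order estimate in Proposition \ref{Second order estimate}.
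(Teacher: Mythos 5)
Your proposal is correct and follows essentially the same route as the paper: differentiate the rescaled equation $\sigma_{2}(\hat{\omega})=e^{-2\vp}F$ twice, bound the $F^{i\ov{j},p\ov{q}}$-quadratic term by squares of third derivatives via Cauchy--Schwarz (yielding the $16n^{2}\alpha^{2}e^{-2\vp}$ coefficient that becomes $8n|\alpha|e^{-\vp}$ after dividing by $2n|\alpha|e^{-\vp}$), control the right-hand side through (\ref{Definition of f}) with only linear occurrences of third derivatives, and commute derivatives via (\ref{ddbar formula}) to pass to $F^{i\ov{i}}e_{i}\ov{e}_{i}(\vp_{k\ov{l}})$. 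The only cosmetic difference is the order of operations (the paper first bounds $|F^{i\ov{i}}e_{k}\ov{e}_{l}(e^{-\vp}\ti{g}_{i\ov{i}})|$ and then divides and commutes, rather than isolating the fourth-order term first), which does not change the substance.
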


\begin{proof}
Differentiating (\ref{fu-3}) twice  along $e_{k}$ and  $\bar {e}_{l}$ at $x_{0}$, we have
\begin{equation}\label{second-derivative-sigma2}
\begin{split}
&F^{i\ov{j},p\ov{q}}e_{k}(e^{-\vp}\ti{g}_{i\ov{j}})\ov{e}_{l}  (e^{-\vp}\ti{g}_{p\ov{q}})
+F^{i\ov{j}}e_{k}\ov{e}_{l}(e^{-\vp}\ti{g}_{i\ov{j}}) \\
&= -2n(n-1)\alpha e_{k}\ov{e}_{l}(e^{-\vp}|\de\vp|_{g}^{2})+\frac{n(n-1)}{2}e_{k}\ov{e}_{l}(e^{-2\vp}f).
\end{split}
\end{equation}
Let
\begin{equation*}
\begin{split}
I_{1} = {} & -F^{i\ov{j},p\ov{q}}e_{k}(e^{-\vp}\ti{g}_{i\ov{j}})\ov{e}_{l}(e^{-\vp}\ti{g}_{p\ov{q}}), \\[1.5mm]
I_{2} = {} & -2n(n-1)\alpha e_{k}\ov{e}_{l}(e^{-\vp}|\de\vp|_{g}^{2}), \\
I_{3} = {} & \frac{n(n-1)}{2}e_{k}\ov{e}_{l}(e^{-2\vp}f).
\end{split}
\end{equation*}
Then (\ref{second-derivative-sigma2}) becomes
\begin{equation}\label{Second order estimate equation 2}
F^{i\ov{j}}e_{k}\ov{e}_{l}(e^{-\vp}\ti{g}_{i\ov{j}})
= I_{1}+I_{2}+I_{3}.
\end{equation}
We estimate each term in (\ref{Second order estimate equation 2}) below.   For $I_{1}$, by (\ref{Second order estimate equation 1}), Proposition \ref{Gradient estimate} and the Cauchy-Schwarz inequality, we have
\begin{equation*}
\begin{split}
|I_{1}| \leq {} & \sum_{i,j,k}\left|e_{k}(\alpha e^{-2\vp}\rho_{i\ov{j}}+2n\alpha e^{-\vp}\vp_{i\ov{j}})\right|^{2} \\
        \leq {} & 2\sum_{i,j,k}\left|e_{k}(2n\alpha e^{-\vp}\vp_{i\ov{j}})\right|^{2}
                  +2\sum_{i,j,k}\left|e_{k}(\alpha e^{-2\vp}\rho_{i\ov{j}})\right|^{2} \\
        \leq {} & 8n^{2}\alpha^{2}e^{-2\vp}\sum_{i,j,k}\left|e_{k}e_{i}\ov{e}_{j}(\vp)-e_{k}[e_{i},\ov{e}_{j}]^{(0,1)}(\vp)
                  -\vp_{k}\vp_{i\ov{j}}\right|^{2}+Ce^{-4\vp} \\
        \leq {} & 16n^{2}\alpha^{2}e^{-2\vp}\sum_{i,j,k}|e_{k}e_{i}\ov{e}_{j}(\vp)|^{2}
                  +Ce^{-2\vp}\sum_{i,j}\left(|e_{i}\ov{e}_{j}(\vp)|^{2}+|e_{i}{e}_{j}(\vp)|^{2}\right)+Ce^{-2\vp},
\end{split}
\end{equation*}
where we used (\ref{ddbar formula}) in the last inequality.  Similarly,  for $I_{2}$ and $I_{3}$, we get
\begin{equation*}
|I_{2}| \leq Ce^{-\vp}\sum_{i,j,p}|e_{p}e_{i}\ov{e}_{j}(\vp)|
             +Ce^{-\vp}\sum_{i,j}\left(|e_{i}\ov{e}_{j}(\vp)|^{2}+|e_{i}{e}_{j}(\vp)|^{2}\right)+Ce^{-\vp}
\end{equation*}
and
\begin{equation*}
\begin{split}
|I_{3}|    = {} & \frac{n(n-1)}{2}e^{-2\vp}\left|4\vp_{k}\vp_{\ov{l}}f-2e_{k}\ov{e}_{l}(\vp)f
                  -2\vp_{\ov{l}}f_{k}-2\vp_{k}f_{\ov{l}}+e_{k}\ov{e}_{l}f\right| \\
        \leq {} & Ce^{-2\vp}\sum_{i,j,p}|e_{p}e_{i}\ov{e}_{j}(\vp)|+Ce^{-2\vp}\sum_{i,j}\left(|e_{i}\ov{e}_{j}(\vp)|^{2}
                  +|e_{i}{e}_{j}(\vp)|^{2}\right)+Ce^{-2\vp},
\end{split}
\end{equation*}
where we   used Proposition \ref{Gradient estimate} and (\ref{Definition of f}).  Thus substituting these estimates into (\ref{Second order estimate equation 2}),   we obtain
\begin{equation}\label{Second order estimate equation 7}
\begin{split}
&|F^{i\ov{i}}e_{k}\ov{e}_{l}(e^{-\vp}\ti{g}_{i\ov{i}})|\\
&\leq  16n^{2}\alpha^{2}e^{-2\vp}\sum_{i,j,p}|e_{p}e_{i}\ov{e}_{j}(\vp)|^{2}+Ce^{-\vp}\sum_{i,j,p}|e_{p}e_{i}\ov{e}_{j}(\vp)| \\
        & +Ce^{-\vp}\sum_{i,j}(|e_{i}\ov{e}_{j}(\vp)|^{2}+|e_{i}e_{j}(\vp)|^{2})+Ce^{-\vp}.
\end{split}
\end{equation}

On the other hand,  by the definition of $\ti{g}_{i\ov{i}}$ and (\ref{ddbar formula}),  we have
\begin{equation*}
\begin{split}
F^{i\ov{i}}e_{k}\ov{e}_{l}(e^{-\vp}\ti{g}_{i\ov{i}})
= {} & \alpha F^{i\ov{i}}e_{k}\ov{e}_{l}(e^{-2\vp}\rho_{i\ov{i}})+2n\alpha F^{i\ov{i}}e_{k}\ov{e}_{l}(e^{-\vp}\vp_{i\ov{i}}) \\
= {} & \alpha F^{i\ov{i}}e_{k}\ov{e}_{l}(e^{-2\vp}\rho_{i\ov{i}})+2n\alpha F^{i\ov{i}}e_{k}\ov{e}_{l}(e^{-\vp}e_{i}\ov{e}_{i}(\vp)) \\
     & -2n\alpha F^{i\ov{i}}e_{k}\ov{e}_{l}(e^{-\vp}[e_{i},\ov{e}_{i}]^{(0,1)}(\vp)).
\end{split}
\end{equation*}
Then by  (\ref{Second order estimate equation 1}) and Proposition \ref{Gradient estimate},  it  follows that
\begin{equation}
\begin{split}
|2n\alpha e^{-\vp}F^{i\ov{i}}e_{k}\ov{e}_{l}e_{i}\ov{e}_{i}(\vp)|
\leq {} & |F^{i\ov{i}}e_{k}\ov{e}_{l}(e^{-\vp}\ti{g}_{i\ov{i}})|+Ce^{-\vp}\sum_{i,j,p}|e_{p}e_{i}\ov{e}_{j}(\vp)| \notag\\
        & +Ce^{-\vp}\sum_{i,j}(|e_{i}\ov{e}_{j}(\vp)|^{2}+|e_{i}e_{j}(\vp)|^{2})+Ce^{-\vp}.
\end{split}
\end{equation}
Thus substituting (\ref{Second order estimate equation 7}) into the above inequality, we  derive
\begin{equation}\label{Second order estimate equation 5}
\begin{split}
|F^{i\ov{i}}e_{k}\ov{e}_{l}e_{i}\ov{e}_{i}(\vp)|
\leq {} & 8n|\alpha|e^{-\vp}\sum_{i,j,p}|e_{p}e_{i}\ov{e}_{j}(\vp)|^{2}+C\sum_{i,j,p}|e_{p}e_{i}\ov{e}_{j}(\vp)| \\
        & +C\sum_{i,j}(|e_{i}\ov{e}_{j}(\vp)|^{2}+|e_{i}e_{j}(\vp)|^{2})+C.
\end{split}
\end{equation}

Note  that
\begin{equation}\label{Second order estimate equation 4}
\begin{split}
       e_{i}\ov{e}_{i}e_{k}\ov{e}_{l}(\vp)
= {} & e_{k}\ov{e}_{l}e_{i}\ov{e}_{i}(\vp)+e_{k}[e_{i},\ov{e}_{l}]\ov{e}_{i}(\vp)+[e_{i},e_{k}]\ov{e}_{l}\ov{e}_{i}(\vp) \\
     & +e_{i}e_{k}[\ov{e}_{i},\ov{e}_{l}](\vp)+e_{i}[\ov{e}_{i},e_{k}]\ov{e}_{l}(\vp).
\end{split}
\end{equation}
Since $(M,\omega)$ is Hermitian, near $x_{0}$, $[e_{i},e_{k}]$ is a $(1,0)$ vector field and $[\ov{e}_{i},\ov{e}_{l}]$ is a $(0,1)$ vector field.    By  (\ref{Second order estimate equation 1}) and (\ref{Second order estimate equation 5}),  we see that
\begin{equation*}
\begin{split}
|F^{i\ov{i}}e_{i}\ov{e}_{i}e_{k}\ov{e}_{l}(\vp)|
\leq {} & |F^{i\ov{i}}e_{k}\ov{e}_{l}e_{i}\ov{e}_{i}(\vp)|+C\sum_{i,j,p}|e_{p}e_{i}\ov{e}_{j}(\vp)| \\
        & +C\sum_{i,j}(|e_{i}\ov{e}_{j}(\vp)|+|e_{i}e_{j}(\vp)|) \\
\leq {} & 8n|\alpha|e^{-\vp}\sum_{i,j,p}|e_{p}e_{i}\ov{e}_{j}(\vp)|^{2}+C\sum_{i,j,p}|e_{p}e_{i}\ov{e}_{j}(\vp)| \\
        & +C\sum_{i,j}(|e_{i}\ov{e}_{j}(\vp)|^{2}+|e_{i}e_{j}(\vp)|^{2})+C.
\end{split}
\end{equation*}
As a consequence, we obtain
\begin{equation*}
\begin{split}
|F^{i\ov{i}}e_{i}\ov{e}_{i}(\vp_{k\ov{l}})|
\leq {} & |F^{i\ov{i}}e_{i}\ov{e}_{i}e_{k}\ov{e}_{l}(\vp)|+|F^{i\ov{i}}e_{i}\ov{e}_{i}[e_{k},\ov{e}_{l}]^{(0,1)}(\vp)| \\[2.5mm]
\leq {} & 8n|\alpha|e^{-\vp}\sum_{i,j,p}|e_{p}e_{i}\ov{e}_{j}(\vp)|^{2}+C\sum_{i,j,p}|e_{p}e_{i}\ov{e}_{j}(\vp)| \\
        & +C\sum_{i,j}(|e_{i}\ov{e}_{j}(\vp)|^{2}+|e_{i}e_{j}(\vp)|^{2})+C.
\end{split}
\end{equation*}
The lemma is proved.
\end{proof}

\begin{proof}[Proof of  Proposition \ref{Second order estimate}]  By Lemma  \ref{Second order estimate lemma} and the Cauchy-Schwarz inequality, at $x_{0}$, we have
\begin{equation*}
\begin{split}
F^{i\ov{i}}e_{i}\ov{e}_{i}(|\de\dbar\vp|_{g}^{2})
   = {} & 2\sum_{k,l}F^{i\ov{i}}e_{i}\ov{e}_{i}(\vp_{k\ov{l}})\vp_{l\ov{k}}+2\sum_{k,l}F^{i\ov{i}}e_{i}(\vp_{k\ov{l}})\ov{e}_{i}(\vp_{l\ov{k}}) \\
\geq {} & -2|\de\dbar\vp|_{g}\sum_{k,l}|F^{i\ov{i}}e_{i}\ov{e}_{i}(\vp_{k\ov{l}})|+\frac{1}{2}\sum_{i,j,p}|e_{p}e_{i}\ov{e}_{j}(\vp)|^{2} \\
        & -C\sum_{i,j}(|e_{i}\ov{e}_{j}(\vp)|^{2}+|e_{i}e_{j}(\vp)|^{2})-C \\
\geq {} & \left(\frac{1}{4}-8n^{3}|\alpha|e^{-\vp}|\de\dbar\vp|_{g}\right)\sum_{i,j,p}|e_{p}e_{i}\ov{e}_{j}(\vp)|^{2}-C \\
        & -C(|\de\dbar\vp|_{g}+1)\sum_{i,j}(|e_{i}\ov{e}_{j}(\vp)|^{2}+|e_{i}{e}_{j}(\vp)|^{2}).
\end{split}
\end{equation*}
Recalling (\ref{Second order estimate equation 6}) and $|\de\dbar\vp|_{g}\leq D$.  Thus
\begin{equation*}
\begin{split}
F^{i\ov{i}}e_{i}\ov{e}_{i}(|\de\dbar\vp|_{g}^{2})
\geq -C_{0}(D+1)\sum_{i,j}(|e_{i}\ov{e}_{j}(\vp)|^{2}+|e_{i}{e}_{j}(\vp)|^{2})-C_{0},
\end{split}
\end{equation*}
where $C_{0}$ is a uniform constant. On the other hand, by Lemma \ref{Maximum principle gradient lemma}, we have
\begin{equation*}
F^{i\ov{i}}e_{i}\ov{e}_{i}(|\de\vp|_{g}^{2})
\geq \frac{1}{2}\sum_{i,j}(|e_{i}\ov{e}_{j}(\vp)|^{2}+|e_{i}{e}_{j}(\vp)|^{2})-C_{1}.
\end{equation*}
Hence, by the maximum principle, at $x_{0}$, we get
\begin{equation*}
\begin{split}
0 \geq {} & F^{i\ov{i}}e_{i}\ov{e}_{i}(Q) \\[1mm]
     = {} & F^{i\ov{i}}e_{i}\ov{e}_{i}(|\de\dbar\vp|_{g}^{2})+BF^{i\ov{i}}e_{i}\ov{e}_{i}(|\de\vp|_{g}^{2}) \\
  \geq {} & \left(\frac{B}{2}-C_{0}D-C_{0}\right)\sum_{i,j}(|e_{i}\ov{e}_{j}(\vp)|^{2}+|e_{i}{e}_{j}(\vp)|^{2})-C_{0}-C_{1}B.
\end{split}
\end{equation*}
Choose  $B=8C_{0}D+8C_{0}$.   It follows that
\begin{equation*}
|\de\dbar\vp|_{g}^{2}(x_{0}) \leq C.
\end{equation*}
Therefore,    by  Proposition \ref{Gradient estimate}, at the expense of increasing $D_{0}$, we obtain
\begin{equation*}
\max_{M}|\de\dbar\vp|_{g}^{2} \leq |\de\dbar\vp|_{g}^{2}(x_{0})+BC \leq CD \leq \frac{D^{2}}{4}.
\end{equation*}

\end{proof}

\section{Proofs of Theorem \ref{Existence Theorem} and Theorem \ref{Uniqueness Theorem}}

In this section, we prove Theorem \ref{Existence Theorem} and Theorem \ref{Uniqueness Theorem}. We use the continuity method and consider the family of equations ($t\in[0,1]$),
\begin{equation}\label{Fu-Yau equation t}
\begin{split}
\ddbar(e^{\vp}\omega & -t\alpha e^{-\vp}\rho)\wedge\omega^{n-2} \\
& +n\alpha\ddbar\vp\wedge\ddbar\vp\wedge\omega^{n-2}+t\mu\frac{\omega^{n}}{n!}=0,
\end{split}
\end{equation}
where $\vp$ satisfies the elliptic condition,
\begin{equation}\label{Elliptic condition t}
e^{\vp}\omega+t\alpha e^{-\vp}\rho+2n\alpha\ddbar\vp \in \Gamma_{2}(M)
\end{equation}
and the normalization condition
\begin{equation}\label{Normalization condition t}
\|e^{-\vp}\|_{L^{1}} = A.
\end{equation}
We shall prove that  (\ref{Fu-Yau equation t}) is solvable for any $t\in[0,1]$. As the Fu-Yau equation (\ref{Fu-Yau equation}),
 (\ref{Fu-Yau equation t}) is equivalent  to a $2$-nd Hessian type equation as (\ref{fu-2}).

For a fixed $\beta\in (0,1)$, we define the following sets of functions on $M$,
\begin{equation*}
\begin{split}
B & = \{ \vp\in C^{2,\beta}(M) ~|~ \|e^{-\vp}\|_{L^{1}}=A \},\\[2mm]
B_{1} & = \{ (\vp,t)\in B\times[0,1] ~|~ \text{$\vp$ satisfies (\ref{Elliptic condition t})} \},\\
B_{2} & = \{ u\in C^{\beta}(M) ~|~ \int_{M}u\omega^{n}=0 \}.
\end{split}
\end{equation*}
Then $B_{1}$ is an open subset of $B\times[0,1]$. Since $\int_{M}\mu\omega^{n}=0$, we introduce a map $\Phi:B_{1}\rightarrow B_{2}$,
\begin{equation*}
\begin{split}
\Phi(\vp,t)\omega^{n} = {} & \ddbar(e^{\vp}\omega-t\alpha e^{-\vp}\rho)\wedge\omega^{n-2} \\
& +n\alpha\ddbar\vp\wedge\ddbar\vp\wedge\omega^{n-2}+t\mu\frac{\omega^{n}}{n!}.
\end{split}
\end{equation*}
Let $I$ be the set
\begin{equation*}
\{ t\in [0,1] ~|~ \text{there exists $(\vp,t)\in B_{1}$ such that $\Phi(\vp,t)=0$} \}.
\end{equation*}
Thus,  to prove Theorem \ref{Existence Theorem}, it suffices to prove that $I=[0,1]$.   Note that $\vp_{0}=-\ln A$ is a solution of  (\ref{Fu-Yau equation t})  at  $t=0$.  Hence, we have $0\in I$.  In the following,   we  prove that the set $I$ is both  open and closed.

\subsection{Openness}
Suppose that $\hat{t}\in I$.  By the definition of the set $I$, there exists $(\hat{\vp},\hat{t})\in B_{1}$ such that $\Phi(\hat{\vp},\hat{t})=0$. Let $(D_{\vp}\Phi)_{(\hat{\vp},\hat{t})}$ be the linearized operator of $\Phi$ at $\hat{\vp}$. Then we have
\begin{equation*}
(D_{\vp}\Phi)_{(\hat{\vp},\hat{t})}: \{ u\in C^{2,\beta}(M) ~|~ \int_{M}ue^{-\hat{\vp}}\omega^{n}=0 \}
\rightarrow \{ v\in C^{\beta}(M) ~|~ \int_{M}v\omega^{n}=0 \}
\end{equation*}
and
\begin{equation*}
\begin{split}
(D_{\vp}\Phi)_{(\hat{\vp},\hat{t})}(u)\omega^{n} = \ddbar(ue^{\hat{\vp}}\omega & +\hat{t}\alpha ue^{-\hat{\vp}}\rho)\wedge\omega^{n-2} \\
& +2n\alpha\ddbar\hat{\vp}\wedge\ddbar u\wedge\omega^{n-2}.
\end{split}
\end{equation*}
We use the implicit function theorem to  prove the openness of $I$.  It suffices to prove that $(D_{\vp}\Phi)_{(\hat{\vp},\hat{t})}$ is injective and surjective.  For convenience, we let   $L:C^{2,\beta}(M)\rightarrow C^{\beta}(M)$ be an extension  operator of  $(D_{\vp}\Phi)_{(\hat{\vp},\hat{t})}$.  First we compute the formal $L^{2}$-adjoint of $L$ in the following.

  For any $u,v\in C^{\infty}(M)$, we have
\begin{equation*}
\begin{split}
     & \int_{M}vL(u)\omega^{n} \\
= {} & \int_{M}v\left(\ddbar(ue^{\hat{\vp}}\omega+\hat{t}\alpha ue^{-\hat{\vp}}\rho)
       +2n\alpha\ddbar\hat{\vp}\wedge\ddbar u\right)\wedge\omega^{n-2} \\
= {} & \int_{M}u\left((e^{\hat{\vp}}\omega+\hat{t}\alpha e^{-\hat{\vp}}\rho)\wedge\ddbar v
       +2n\alpha\ddbar\hat{\vp}\wedge\ddbar v\right)\wedge\omega^{n-2}.
\end{split}
\end{equation*}
This  implies that
\begin{equation*}
L^{*}(v)\omega^{n} = \ddbar v\wedge\left((e^{\hat{\vp}}\omega+\hat{t}\alpha e^{-\hat{\vp}}\rho)+2n\alpha\ddbar\hat{\vp}\right)\wedge\omega^{n-2}.
\end{equation*}
By the strong maximum principle, it follows
\begin{equation*}
\textrm{Ker}L^{*} = \{ \text{Constant functions on $M$} \}.
\end{equation*}
Since the index of $L$ is zero,  we see  that $\dim\textrm{Ker}L=1$. Combining this with the theory of linear elliptic equations, there exists a positive function $u_{0}\in C^{2,\beta}(M)$ such that
\begin{equation*}
\textrm{Ker}L = \{ cu_{0} ~|~ c\in\mathbf{R} \}.
\end{equation*}
Hence,
\begin{equation*}
\int_{M}u_{0}e^{-\hat{\vp}}\omega^{n}>0 \text{~and~}
u_{0} \notin \{ u\in C^{2,\beta}(M) ~|~ \int_{M}ue^{-\hat{\vp}}\omega^{n}=0 \},
\end{equation*}
which implies $(D_{\vp}\Phi)_{(\hat{\vp},\hat{t})}$ is injective.

Next, for any $v\in C^{\beta}(M)$ such that $\int_{M}v\omega^{n}=0$, by the Fredholm alternative, there exists a weak solution $u$ of the equation $Lu=v$.   Moreover, by  the theory of linear elliptic equations, we see  that $u\in C^{2,\beta}(M)$.
Taking
\begin{equation*}
c_{0} = -\frac{\int_{M}ue^{-\hat{\vp}}\omega^{n}}{\int_{M}u_{0}e^{-\hat{\vp}}\omega^{n}}.
\end{equation*}
Then
\begin{equation*}
(D_{\vp}\Phi)_{(\hat{\vp},\hat{t})}(u+c_{0}u_{0})=L(u+c_{0}u_{0})=v \text{~and~} \int_{M}(u+c_{0}u_{0})e^{-\hat{\vp}}\omega^{n}=0,
\end{equation*}
which implies $(D_{\vp}\Phi)_{(\hat{\vp},\hat{t})}$ is surjective.

\subsection{Closeness}
Since $0\in I$ and $I$ is open, there exists $t_{0}\in (0,1]$ such that $[0,t_{0})\subset I$. We need to prove $t_{0}\in I$. It suffices to prove the following proposition.

\begin{proposition}\label{A priori estimate}
Let $\vp_{t}$ be the solution of (\ref{Fu-Yau equation t}). If $\vp_{t}$ satisfies (\ref{Elliptic condition t}) and (\ref{Normalization condition t}), there exists a constant $C_{A}$ depending only on $A$, $t_{0}$, $\rho$, $\mu$, $\alpha$, $\beta$ and $(M,\omega)$ such that
\begin{equation*}
\|\vp_{t}\|_{C^{2,\beta}}\leq C_{A}.
\end{equation*}
\end{proposition}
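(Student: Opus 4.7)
The plan is to close a continuity argument in $t$ that combines Propositions \ref{Zero order estimate}, \ref{Gradient estimate}, and \ref{Second order estimate} to produce uniform $C^{2}$ bounds on $\vp_t$, then to upgrade these to $C^{2,\beta}$ via Evans-Krylov.

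First I would fix $A$ small enough that three conditions hold simultaneously: $A \le A_{0}$, $M_{0} A \le \delta_{0}/2$, and $A \le A_{D_0} = 1/(C_{0} M_{0} D_{0})$, where the constants are the ones produced by the three propositions. Introduce the subset
\[
S = \{\, t \in [0, t_0) : e^{-\vp_t} \le \delta_{0} \text{ and } |\de\dbar \vp_t|_{g} \le D_{0} \,\}.
\]
At $t = 0$ the function $\vp_{0} \equiv -\log A$ is constant, so $0 \in S$. For any $t \in S$, applying the three propositions in turn yields the strict improvement $e^{-\vp_t} \le M_{0} A \le \delta_{0}/2$, the $D$-independent bound $|\de \vp_t|_{g}^{2} \le M_{1}$, and the strict improvement $|\de\dbar \vp_t|_{g} \le D_{0}/2$. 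Because each improvement is strict, and $t \mapsto \vp_t$ is continuous in $C^{2}$ (by the same implicit-function argument that established openness of $I$), the set $S$ is both open and closed in $[0, t_0)$; hence $S = [0, t_0)$. This yields uniform $C^{0}, C^{1}, C^{2}$ estimates on $\vp_t$ that are independent of $t \in [0, t_0)$.

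Next I would verify that on this family the equation is uniformly elliptic and concave. By (\ref{Expression of Fij}) combined with (\ref{Second order estimate equation 6}), the coefficients $F^{i\ov{j}}$ lie within $\frac{1}{100}$ of $(n-1)\delta_{ij}$, so the linearization is uniformly elliptic. The right-hand side $F(z, \vp_t, \de \vp_t)$ of (\ref{fu-2}) has bounded $C^{\beta}$ norm by the uniform $C^{1}$ estimate together with the explicit expression (\ref{Definition of f}). Since $\sigma_{2}^{1/2}$ is concave on $\Gamma_{2}$, the Evans-Krylov theorem on the compact manifold $M$ then produces the desired uniform $C^{2,\beta}$ bound $\|\vp_t\|_{C^{2,\beta}} \le C_{A}$ with the claimed dependencies.

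The main obstacle is making the continuity-in-$t$ bootstrap close. Each of the three propositions merely assumes a weak inequality but must return a strictly stronger one in order for $S$ to be open; this works only thanks to the $D$-independent gradient bound of Proposition \ref{Gradient estimate} and the contraction factor $1/2$ in Proposition \ref{Second order estimate}, both of which force $A$ to be chosen small in terms of $D_{0}$ and $M_{0}$. Once this closure is achieved, the upgrade to $C^{2,\beta}$ by Evans-Krylov and the subsequent standard Schauder bootstrap are routine.
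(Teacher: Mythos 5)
Your proposal is correct and follows essentially the same route as the paper: a continuity-in-$t$ argument that propagates the hypotheses of Propositions \ref{Zero order estimate} and \ref{Second order estimate} using the strict improvements they return (the paper phrases this as a ``first time of failure'' argument done separately for the $C^0$ and $C^2$ bounds, whereas you merge them into one open-and-closed set, which is equivalent), followed by the $D$-independent gradient bound and a uniform ellipticity/non-degeneracy observation. The only cosmetic difference is that you invoke Evans--Krylov via concavity of $\sigma_2^{1/2}$ where the paper cites the Tosatti--Wang--Weinkove--Yang $C^{2,\alpha}$ estimate, which is the same ingredient.
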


\begin{proof} First, we  prove the zero order estimate.   In fact,  we have
\begin{claim}\label{claim-2}
\begin{equation}\label{Closeness equation 1}
\sup_{M}e^{-\vp_{t}} \leq 2M_{0}A, ~t\in[0,t_{0}),
\end{equation}
where $M_{0}$ is the constant in Proposition \ref{Zero order estimate}.
\end{claim}
 Note that  $\vp_{0}=-\ln A$.  Then $\sup_{M}e^{-\vp_{0}} \leq M_{0}A$, which satisfies (\ref{Closeness equation 1}).   Thus,  if (\ref{Closeness equation 1}) is false,  there will  exist  $\tilde{t}\in (0,t_{0})$ such that
\begin{equation}\label{Closeness equation 2}
\sup_{M}e^{-\vp_{\tilde{t}}} = 2M_{0}A.
\end{equation}
We may assume  that $2M_{0}A\le   \delta_{0}$,  where $\delta_{0}=\sqrt{\frac{1}{2|\alpha|\|\rho\|_{C^{0}}+1}}$ is chosen  as in Proposition \ref{Zero order estimate}.   Namely, $e^{-\vp_{\ti{t}}}\leq\delta_{0}$. Hence,  we can  apply  Proposition \ref{Zero order estimate} to $\vp_{\ti{t}}$  whlie  $\rho$ and  $\mu$  are replaced by $t\rho$ and   $t\mu$, respectively,  and  we obtain
\begin{equation*}
e^{-\vp_{\tilde{t}}} \leq M_{0}A,
\end{equation*}
which contradicts to (\ref{Closeness equation 2}). This proves (\ref{Closeness equation 1}). Combining (\ref{Closeness equation 1}) and Proposition \ref{Zero order estimate}, we obtain the zero order estimate

Next, we use the similar argument to prove the second order estimate
\begin{equation}\label{Closeness equation 4}
\sup_{M}|\de\dbar\vp_{t}|_{g} \leq D_{0},
\end{equation}
for any $t\in(0,t_{0})$, where $D_{0}$ is the constant as in Proposition \ref{Second order estimate}. If (\ref{Closeness equation 4}) is false, there exists  $\tilde{t}\in (0,t_{0})$ such that
\begin{equation*}
\sup_{M}|\de\dbar\vp_{\ti{t}}|_{g} = D_{0}.
\end{equation*}
Recalling Proposition \ref{Second order estimate}, we get
\begin{equation*}
\sup_{M}|\de\dbar\vp_{\ti{t}}|_{g} \leq \frac{D_{0}}{2},
\end{equation*}
which is a contradiction. Thus (\ref{Closeness equation 4}) is true.

By (\ref{Closeness equation 4}) and Proposition \ref{Gradient estimate}, we have  the first order estimate
\begin{equation}\label{Closeness equation 3}
\sup_{M}|\de\vp_{t}|_{g}^{2} \leq C.
\end{equation}
Combining  (\ref{Closeness equation 4}) and (\ref{Closeness equation 3}) with equation (\ref{Fu-Yau equation t}) (Note that (\ref{Fu-Yau equation t}) is equivalent to a $2$-nd Hessian type equation as (\ref{fu-2})),  we get
\begin{equation}\label{non-degenrate}
\left|\sigma_{2}(\ti{\omega})-\frac{n(n-1)}{2}e^{2\vp}\right| \leq Ce^{\vp}.
\end{equation}
Then,  by the zero order estimate, we deduce
\begin{equation*}
\frac{1}{CA^{2}} \leq \sigma_{2}(\ti{\omega}) \leq \frac{C}{A^{2}}.
\end{equation*}
Hence,   (\ref{Fu-Yau equation t})  is uniformly elliptic and non-degenerate.  By  the $C^{2,\alpha}$-estimate (cf. \cite[Theorem 1.1]{TWWY15}), we obtain
\begin{equation}\label{c2-alpha}
\|\vp_{t}\|_{C^{2,\beta}}\leq C_{A}.
\end{equation}

\end{proof}

\subsection{Uniqueness}
In this subsection, we give the proof of Theorem \ref{Uniqueness Theorem}. First, we show the uniqueness of solutions to (\ref{Fu-Yau equation t}) when $t=0$.

\begin{lemma}\label{Uniqueness lemma 1}
When $t=0$, (\ref{Fu-Yau equation t}) has a unique solution
\begin{equation*}
\vp_{0} = -\ln A.
\end{equation*}
\end{lemma}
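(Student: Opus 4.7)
The plan is to specialize the integration-by-parts argument from the proof of Proposition \ref{Zero order estimate} to the case $\rho \equiv 0$ and $\mu \equiv 0$ (which is exactly what the $t=0$ equation amounts to), and to observe that the resulting inequality is sharp enough to force $\de\vp \equiv 0$.

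At $t=0$, the equation (\ref{Fu-Yau equation t}) reduces to
\[
\ddbar(e^{\vp}\omega)\wedge\omega^{n-2} + n\alpha\,\ddbar\vp\wedge\ddbar\vp\wedge\omega^{n-2} = 0,
\]
while the ellipticity condition (\ref{Elliptic condition t}) becomes $\ti{\omega} = e^{\vp}\omega + 2n\alpha\ddbar\vp \in \Gamma_{2}(M)$. I would then start, exactly as in Proposition \ref{Zero order estimate}, from the inequality
\[
k\int_{M} e^{-k\vp}\sqrt{-1}\de\vp\wedge\dbar\vp\wedge\ti{\omega}\wedge\omega^{n-2} \geq 0,
\]
valid for any $k\geq 1$, since $\sqrt{-1}\de\vp\wedge\dbar\vp$ is a non-negative $(1,1)$-form and $\ti{\omega}\in\Gamma_{2}(M)$ guarantees that $\ti{\omega}\wedge\omega^{n-2}$ is a non-negative $(n-1,n-1)$-form (this is the same positivity tacitly used at the start of the proof of Proposition \ref{Zero order estimate}).

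Next, I would carry out the two integration-by-parts steps (\ref{Infimum estimate equation 2}) and (\ref{Infimum estimate equation 5}) verbatim, but with the $\rho$- and $\mu$-terms dropped. Substituting the $t=0$ equation where (\ref{Infimum estimate equation 2}) invokes (\ref{Fu-Yau equation}), the chain of identities collapses cleanly to
\[
k\int_{M} e^{(1-k)\vp}\sqrt{-1}\de\vp\wedge\dbar\vp\wedge\omega^{n-1} \;\leq\; -2k\int_{M} e^{(1-k)\vp}\sqrt{-1}\de\vp\wedge\dbar\vp\wedge\omega^{n-1},
\]
which in turn gives
\[
k\int_{M} e^{(1-k)\vp}|\de\vp|_{g}^{2}\,\omega^{n} \leq 0.
\]
Since the integrand is pointwise non-negative for any $k\geq 1$, this forces $|\de\vp|_{g}\equiv 0$, so $\vp$ must be a constant. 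The normalization condition (\ref{Normalization condition t}), together with the convention $\vol(M,\omega)=1$ already used in Section 2, then pins down the constant as $\vp \equiv -\ln A$.

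I do not anticipate a serious obstacle: the whole computation is parallel to what has been done in Section 2, and the sharpness of the inequality comes automatically once the inhomogeneous data $\rho$ and $\mu$ vanish. The only bookkeeping subtlety is to state the normalization $\vol(M,\omega)=1$ explicitly, for otherwise the unique constant would read $\vp \equiv -\ln(A/\vol(M,\omega))$.
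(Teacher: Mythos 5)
Your proposal is correct and follows essentially the same route as the paper: the authors also specialize the integration-by-parts chain (\ref{Infimum estimate equation 2})--(\ref{Infimum estimate equation 3}) (taking $k=1$) to conclude $\int_{M}\sqrt{-1}\de\vp\wedge\dbar\vp\wedge\omega^{n-1}=0$ at $t=0$, hence $\vp$ is constant and the normalization fixes it to $-\ln A$. Your observation about the $\vol(M,\omega)=1$ convention matches the normalization already declared in the proof of Proposition \ref{Zero order estimate}.
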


\begin{proof}
By the similar calculation of (\ref{Infimum estimate equation 3}) (taking $k=1$), we obtain
\begin{equation}
\begin{split}
& \int_{M}e^{-\vp}(e^{\vp}\omega+\alpha e^{-\vp}t\rho)\wedge\sqrt{-1}\de\vp\wedge\dbar\vp\wedge\omega^{n-2} \\
\leq {} & 2\alpha \int_{M}e^{-2\vp}\sqrt{-1}\de\vp \wedge\dbar(t\rho)\wedge\omega^{n-2}-2\int_{M}e^{-\vp}t\mu\frac{\omega^{n}}{n!}.\notag
\end{split}
\end{equation}
When $t=0$, it is clear that
\begin{equation*}
\int_{M}\sqrt{-1}\de\vp\wedge\dbar\vp\wedge\omega^{n-1} = 0.
\end{equation*}
Combining this with the normalization condition $\|e^{-\vp}\|_{L^{1}}=A$, we obtain
\begin{equation*}
\vp_{0} = -\ln A.
\end{equation*}
\end{proof}

\begin{proof}[Proof of Theorem \ref{Uniqueness Theorem}]
Assume that we have two solutions $\vp$ and $\vp'$ of  (\ref{Fu-Yau equation}).   We use the continuity method to solve (\ref{Fu-Yau equation t}) from $t=1$ to $0$. Note that $\vp $ and $\vp'$ are both solutions when $t=1$.   Then by  the implicit function theorem
as in  Subsection 5.1,    there is a smooth solution $\varphi_t^1$  (or  $\varphi_t^2$) of  (\ref{Fu-Yau equation t})  for  any $t\in (t_0, 1]$   ($t_0<1$) with the property $\varphi_1^1= \vp $ (or  $\varphi_1^2=\vp'$).
Set
\begin{align}
J_{\vp }= \{ t\in [0,1] ~|~&\text{there exists  a  family of smooth solutions}~ \varphi_{t'}^1~{\rm of}~ (\ref{Fu-Yau equation t} )\notag\\
& {\rm for ~any}~t'\in [t,1]~ \text{such that} ~ \vp_{1}^{1} = \vp  \}.\notag
\end{align}
From the argument in Section 2-4,  we see that Proposition  \ref{Zero order estimate},  Proposition \ref{Gradient estimate}  and Proposition \ref{Second order estimate} are  still true for  $\varphi_t^1$. As a consequence,  Proposition \ref{A priori estimate}  holds  for  $\varphi_t^1$.  Thus  $J_{\vp }=  [0,1]$. Similarly,  $J_{\vp' }=  [0,1]$. On the other hand, thanks to Lemma \ref{Uniqueness lemma 1},  we have
\begin{equation}
\vp_{0}^1 = \vp_{0}^2 = -\ln A.\notag
\end{equation}
Hence $\varphi_t^1=\varphi_t^2$  for any $t\in  [0,1]$. Theorem \ref{Uniqueness Theorem} is proved.
\end{proof}

It seems that the condition (\ref{condition-uniqueness}) in   Theorem \ref{Uniqueness Theorem}  can be removed.    In precise,  we have the following conjecture.

\begin{conjecture}\label{uniqueness-general}
The solution $\vp$ of (\ref{Fu-Yau equation}) in  Theorem \ref{Existence Theorem} is unique.
\end{conjecture}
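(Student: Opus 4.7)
The plan is to prove Conjecture \ref{uniqueness-general} by running the continuity method in reverse. Given an arbitrary solution $\vp$ of (\ref{Fu-Yau equation}) satisfying (\ref{Elliptic condition}) and (\ref{Normalization condition}), one would attempt to deform it through the family (\ref{Fu-Yau equation t}) back to $\vp_0 = -\ln A$ at $t=0$, which is the unique solution there by Lemma \ref{Uniqueness lemma 1}. Running the same procedure on the solution $\vp'$ produced in Theorem \ref{Existence Theorem} would yield two continuous curves in $B_1$ that end at the same point; by the local uniqueness provided by the invertibility of $(D_{\vp}\Phi)$ established in the openness argument of Subsection 5.1, the two curves must coincide, hence $\vp = \vp'$.

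For this scheme to succeed, one needs a priori $C^{0}$, $C^{1}$, $C^{2}$, and $C^{2,\beta}$ bounds for solutions of (\ref{Fu-Yau equation t}) that depend only on the elliptic condition (\ref{Elliptic condition t}) and normalization (\ref{Normalization condition t}), crucially \emph{without} the bootstrap hypotheses $e^{-\vp}\leq\delta_0$ and $|\de\dbar\vp|_{g}\leq D$ used in Propositions \ref{Zero order estimate} and \ref{Second order estimate}. Once a bootstrap-free pointwise control of $e^{-\vp}$ is in hand, Propositions \ref{Gradient estimate} and \ref{Second order estimate} could be invoked after a suitable strengthening, followed by the Evans--Krylov-type $C^{2,\beta}$ estimate as in \cite{TWWY15}; closedness of the connecting set would then follow exactly as in Proposition \ref{A priori estimate}.

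The main obstacle is the a priori $C^{0}$ bound. The proof of Proposition \ref{Zero order estimate} uses $e^{-\vp}\leq\delta_0$ in an essential way at (\ref{Infimum estimate equation 4}), to guarantee $\omega+\alpha e^{-2\vp}\rho \geq \frac{1}{2}\omega$, which is what allows the Moser iteration to absorb the bad terms arising from integration by parts against $e^{-k\vp}$. Without a priori smallness of $e^{-\vp}$ in the region where it is large, the sign of the principal term after the integration by parts cannot be controlled and the iteration collapses. To close this gap one might try to show first, using only the elliptic condition and the normalization $\|e^{-\vp}\|_{L^{1}}=A$ with $A$ small, that the superlevel set $\{e^{-\vp}>\delta_0\}$ has arbitrarily small measure as $A\to 0$, and then upgrade this to a pointwise bound by a De Giorgi-type iteration exploiting the $\sigma_2$-positivity of $\ti{\omega}$. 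A potentially more powerful alternative would be a blow-up/rescaling analysis coupled with a Liouville-type theorem on $\mathbb{C}^n$ for the limiting $\sigma_2$-equation with the gradient-dependent right-hand side appearing in (\ref{fu-2}); establishing such a Liouville theorem appears to be the crux of the conjecture, since the structure of $F(z,\vp,\de\vp)$ on the right-hand side prevents a direct reduction to the purely $\sigma_2$-type results of Dinew--Kołodziej or Sz\'ekelyhidi.
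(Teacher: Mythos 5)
The statement you are addressing is stated in the paper as an open \emph{conjecture}, not a theorem: the authors only prove uniqueness under the additional hypotheses (\ref{condition-uniqueness}) (Theorem \ref{Uniqueness Theorem}), and they remark that the unconditional statement is known to them only in the special case $\alpha<0$, $\rho\geq 0$, where the $C^{0}$ and $C^{2}$ estimates can be obtained without the bootstrap hypotheses. Your proposal correctly reconstructs the intended mechanism (reverse continuity method along (\ref{Fu-Yau equation t}), uniqueness at $t=0$ from Lemma \ref{Uniqueness lemma 1}, propagation by the invertibility of the linearization), which is exactly the paper's proof of the \emph{conditional} Theorem \ref{Uniqueness Theorem}. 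You also correctly locate the obstruction: all of the a priori estimates in Sections 2--4 are conditional on $e^{-\vp}\leq\delta_{0}$ and $|\de\dbar\vp|_{g}\leq D$, and without these the closedness of the connecting set cannot be established for an arbitrary solution.

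The genuine gap is that you do not close this obstruction; you only name candidate strategies (small measure of superlevel sets plus a De Giorgi iteration, or a blow-up argument requiring an unproven Liouville theorem for the gradient-dependent $\sigma_{2}$ equation) and acknowledge that the Liouville theorem ``appears to be the crux.'' That is an accurate diagnosis, but it means the proposal is a research program rather than a proof: the step ``once a bootstrap-free pointwise control of $e^{-\vp}$ is in hand'' is precisely the open part of the conjecture, and nothing in the proposal establishes it. A secondary caution: even granting the estimates, the claim that the two continuity paths ``must coincide'' needs to be run as an open-and-closed argument on the set $\{t:\vp_{t}^{1}=\vp_{t}^{2}\}$, starting from the common value at $t=0$ and using the injectivity of $(D_{\vp}\Phi)$ to propagate equality forward; local uniqueness of the continuation at an interior $t$ alone does not force two curves with the same endpoint at $t=1$ to agree. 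In short, the proposal matches the paper's (conditional) argument in outline but does not prove the conjecture, and the paper itself does not claim to.
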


\begin{remark}We remark  that conjecture \ref{uniqueness-general}  is true if  $\alpha<0$ and $\rho\ge 0$ in  equation (\ref{Fu-Yau equation}).  In fact,  by modifying  the argument in the proof of Proposition \ref{Zero order estimate}, we can get the  $C^0$-estimate for the  solution  $\vp_t$  of equation  (\ref{Fu-Yau equation t})  by the assumption of  (\ref{Elliptic condition}) and  (\ref{Normalization condition}) in this case. Then  by  the $C^2$-estimate in  \cite[Proposition 5, Proposition 6]{PPZ16b}, we  can also obtain  (\ref{c2-alpha}). We will discuss it  for details somewhere.

\end{remark}


\begin{thebibliography}{99}
\bibitem{BH15}  \textit{D. Baraglia} and \textit{P. Hekmati,} {Transitive Courant Algebroids, String Structures and Tduality}, Adv. Theor. Math. Phys. \textbf{19} (2015) 613--672.
\bibitem{BU11}    \textit{U. Bunke,} {String structures and trivialisations of a Pfaffian line bundle,}  Comm. Math.
Phys. \textbf{307} (2011) 675--712.


\bibitem{DK12} \textit{S. Dinew} and \textit{S. Ko{\l}odziej,} {Liouville and Calabi-Yau type theorems for complex Hessian equations,} Amer. J. Math. \textbf{139} (2017), no. 2, 403--415.

 \bibitem{FD86} \textit{D. Freed,} {Determinants, torsion and strings,} Commun. Math. Phys. \textbf{107} (1986), 483--513.

\bibitem{FuY07} \textit{J.-X. Fu} and \textit{S.-T. Yau,} {A Monge-Amp\`{e}re-type equation motivated by string theory,} Comm. Anal. Geom. \textbf{15} (2007), no. 1, 29--75.

\bibitem{FuY08} \textit{J.-X. Fu} and \textit{S.-T. Yau,} {The theory of superstring with flux on non-K\"{a}hler manifolds and the complex Monge-Amp\`{e}re equation,} J. Differential Geom. \textbf{78} (2008), no. 3, 369--428.

\bibitem{GP04} \textit{E. Goldstein} and \textit{S. Prokushkin}, {Geometric model for complex non-K\"{a}hler manifolds with $SU(3)$ structure,} Comm. Math. Phys. \textbf{251} (2004), no. 1, 65--78.
\bibitem{FM14}  \textit{M. Garcia-Fernandez}, {Torsion-free Generalized connections and heterotic supergravity},
Comm. Math. Phys. \textbf{332} (2014) 89--115.

\bibitem{FRT}\textit{ M. Garcia-Fernandez, R. Rubio} and \textit{C. Tipler}, {Infinitesimal moduli for the Strominger
system and killing spinors in generalized geometry}, Math. Ann. \textbf{369} (2017), no. 1-2, 539--595.

\bibitem{HL15} \textit{F. R. Harvey} and \textit{H. B. Lawson,} {Potential theory on almost complex manifolds,} Ann. Inst. Fourier (Grenoble) \textbf{65} (2015), no. 1, 171--210.

\bibitem{HMW10} \textit{Z. Hou, X.-N. Ma} and \textit{D. Wu,} {A second order estimate for complex Hessian equations on a compact K\"{a}hler manifold,} Math. Res. Lett. \textbf{17} (2010), no. 3, 547--561.

\bibitem{LY05} \textit{J. Li} and \textit{S.-T. Yau,} { The existence of supersymmetric string theory with torsion,} J. Differential Geom. \textbf{70} (2005), no. 1, 143--181.

\bibitem{PPZ15} \textit{D. H. Phong, S. Picard} and \textit{X. Zhang,} {On estimates for the Fu-Yau generalization of a Strominger system,} to appear in J. Reine Angew. Math.

\bibitem{PPZ16b} \textit{D. H. Phong, S. Picard} and \textit{X. Zhang,} {The Fu-Yau equation with negative slope parameter,} Invent. Math. \textbf{209} (2017), no. 2, 541--576.

\bibitem{RM87} \textit{M. Reid,} {The moduli space of 3-folds with $K = 0$ may nevertheless be irreducible,} Math.
Ann. \textbf{278} (1987), 329--334.

\bibitem{RC11} \textit{C. Redden,} {String structures and canonical 3-forms}, Pacific J. Math. \textbf{249} (2011) 447--484.

\bibitem{SSS12} \textit{H. Sati, U. Schreiber} and \textit{J. Stasheff,} {Twisted differential string and fivebrane structures,} Comm. Math. Phys. \textbf{315} (2012) 169--213.

\bibitem{Str86} \textit{A. Strominger,} {Superstrings with torsion,} Nuclear Phys. B \textbf{274} (1986), no. 2, 253--284.

\bibitem{Sze15} \textit{G. Sz\'ekelyhidi,} {Fully non-linear elliptic equations on compact Hermitian manifolds,} to appear in J. Differential Geom.

\bibitem{TWWY15} \textit{V. Tosatti, Y. Wang, B. Weinkove} and \textit{X. Yang,} {$C^{2,\alpha}$ estimate for nonlinear elliptic equations in complex and almost complex geometry,} Calc. Var. Partial Differential Equations \textbf{54} (2015), no. 1, 431--453.

\bibitem{WE85}   \textit{E. Witten,} {Global Gravitational Anomalies}, Comm. Math. Phys. \textbf{100} (1985), 197--229.

 \bibitem{YS05}   \text{S.-T. Yau,} {Complex geometry: Its brief history and its future,} Science in China Series
A Mathematics \textbf{48} (2005), 47--60.

\bibitem{Zha15} \textit{D. Zhang,} {Hessian equations on closed Hermitian manifolds,} to appear in Pacific J. Math.


\end{thebibliography}
\end{document}